\title{The matroid of a graphing}
\author{László Lovász\footnote{Alfr\'ed R\'enyi Institute of Mathematics. Research supported by ERC Synergy Grant
No.~810115.}}

\documentclass[12pt,fleqn]{article}
\usepackage{amsmath,amssymb,graphicx,bbm,bm,delarray,pict2e,ntheorem}
\usepackage{hyperref}
\usepackage[latin2]{inputenc}
\usepackage[normalem]{ulem}

\addtolength{\textwidth}{1cm} \addtolength{\textheight}{1cm}
\addtolength{\oddsidemargin}{.6cm} \addtolength{\hoffset}{-1cm}
\addtolength{\evensidemargin}{-1cm}

\sloppy

\newtheorem{theorem}{Theorem}[section]

\newtheorem{lemma}[theorem]{Lemma}

\newtheorem{corollary}[theorem]{Corollary}

\theorembodyfont{\rmfamily}
\newtheorem{remark}[theorem]{Remark}
\newtheorem{example}[theorem]{Example}

\newtheorem{exc}[theorem]{Exercise}

\newenvironment{proof}{\medskip\noindent{\bf Proof. }}{\hfill$\square$\medskip}
\newenvironment{proof*}[1]{\medskip\noindent{\bf Proof of #1.}}{\hfill$\square$\medskip}

\begin{document}

\addtolength{\baselineskip}{3pt}

\def\AA{\mathcal{A}}\def\BB{\mathcal{B}}\def\CC{\mathcal{C}}
\def\DD{\mathcal{D}}\def\EE{\mathcal{E}}\def\FF{\mathcal{F}}
\def\GG{\mathcal{G}}\def\HH{\mathcal{H}}\def\II{\mathcal{I}}
\def\JJ{\mathcal{J}}\def\KK{\mathcal{K}}\def\LL{\mathcal{L}}
\def\MM{\mathcal{M}}\def\NN{\mathcal{N}}\def\OO{\mathcal{O}}
\def\PP{\mathcal{P}}\def\QQ{\mathcal{Q}}\def\RR{\mathcal{R}}
\def\SS{\mathcal{S}}\def\TT{\mathcal{T}}\def\UU{\mathcal{U}}
\def\VV{\mathcal{V}}\def\WW{\mathcal{W}}\def\XX{\mathcal{X}}
\def\YY{\mathcal{Y}}\def\ZZ{\mathcal{Z}}

\def\Ab{\mathbf{A}}\def\Bb{\mathbf{B}}\def\Cb{\mathbf{C}}
\def\Db{\mathbf{D}}\def\Eb{\mathbf{E}}\def\Fb{\mathbf{F}}
\def\Gb{\mathbf{G}}\def\Hb{\mathbf{H}}\def\Ib{\mathbf{I}}
\def\Jb{\mathbf{J}}\def\Kb{\mathbf{K}}\def\Lb{\mathbf{L}}
\def\Mb{\mathbf{M}}\def\Nb{\mathbf{N}}\def\Ob{\mathbf{O}}
\def\Pb{\mathbf{P}}\def\Qb{\mathbf{Q}}\def\Rb{\mathbf{R}}
\def\Sb{\mathbf{S}}\def\Tb{\mathbf{T}}\def\Ub{\mathbf{U}}
\def\Vb{\mathbf{V}}\def\Wb{\mathbf{W}}\def\Xb{\mathbf{X}}
\def\Yb{\mathbf{Y}}\def\Zb{\mathbf{Z}}

\def\ab{\mathbf{a}}\def\bb{\mathbf{b}}\def\cb{\mathbf{c}}
\def\db{\mathbf{d}}\def\eb{\mathbf{e}}\def\fb{\mathbf{f}}
\def\gb{\mathbf{g}}\def\hb{\mathbf{h}}\def\ib{\mathbf{i}}
\def\jb{\mathbf{j}}\def\kb{\mathbf{k}}\def\lb{\mathbf{l}}
\def\mb{\mathbf{m}}\def\nb{\mathbf{n}}\def\ob{\mathbf{o}}
\def\pb{\mathbf{p}}\def\qb{\mathbf{q}}\def\rb{\mathbf{r}}
\def\sb{\mathbf{s}}\def\tb{\mathbf{t}}\def\ub{\mathbf{u}}
\def\vb{\mathbf{v}}\def\wb{\mathbf{w}}\def\xb{\mathbf{x}}
\def\yb{\mathbf{y}}\def\zb{\mathbf{z}}

\def\Abb{\mathbb{A}}\def\Bbb{\mathbb{B}}\def\Cbb{\mathbb{C}}
\def\Dbb{\mathbb{D}}\def\Ebb{\mathbb{E}}\def\Fbb{\mathbb{F}}
\def\Gbb{\mathbb{G}}\def\Hbb{\mathbb{H}}\def\Ibb{\mathbb{I}}
\def\Jbb{\mathbb{J}}\def\Kbb{\mathbb{K}}\def\Lbb{\mathbb{L}}
\def\Mbb{\mathbb{M}}\def\Nbb{\mathbb{N}}\def\Obb{\mathbb{O}}
\def\Pbb{\mathbb{P}}\def\Qbb{\mathbb{Q}}\def\Rbb{\mathbb{R}}
\def\Sbb{\mathbb{S}}\def\Tbb{\mathbb{T}}\def\Ubb{\mathbb{U}}
\def\Vbb{\mathbb{V}}\def\Wbb{\mathbb{W}}\def\Xbb{\mathbb{X}}
\def\Ybb{\mathbb{Y}}\def\Zbb{\mathbb{Z}}

\def\Af{\mathfrak{A}}\def\Bf{\mathfrak{B}}\def\Cf{\mathfrak{C}}
\def\Df{\mathfrak{D}}\def\Ef{\mathfrak{E}}\def\Ff{\mathfrak{F}}
\def\Gf{\mathfrak{G}}\def\Hf{\mathfrak{H}}\def\If{\mathfrak{I}}
\def\Jf{\mathfrak{J}}\def\Kf{\mathfrak{K}}\def\Lf{\mathfrak{L}}
\def\Mf{\mathfrak{M}}\def\Nf{\mathfrak{N}}\def\Of{\mathfrak{O}}
\def\Pf{\mathfrak{P}}\def\Qf{\mathfrak{Q}}\def\Rf{\mathfrak{R}}
\def\Sf{\mathfrak{S}}\def\Tf{\mathfrak{T}}\def\Uf{\mathfrak{U}}
\def\Vf{\mathfrak{V}}\def\Wf{\mathfrak{W}}\def\Xf{\mathfrak{X}}
\def\Yf{\mathfrak{Y}}\def\Zf{\mathfrak{Z}}

\def\alphab{{\boldsymbol\alpha}}\def\betab{{\boldsymbol\beta}}
\def\gammab{{\boldsymbol\gamma}}\def\deltab{{\boldsymbol\delta}}
\def\etab{{\boldsymbol\eta}}\def\zetab{{\boldsymbol\zeta}}
\def\kappab{{\boldsymbol\kappa}}
\def\lambdab{{\boldsymbol\lambda}}\def\mub{{\boldsymbol\mu}}
\def\nub{{\boldsymbol\nu}}\def\pib{{\boldsymbol\pi}}
\def\rhob{{\boldsymbol\rho}}\def\sigmab{{\boldsymbol\sigma}}
\def\taub{{\boldsymbol\tau}}\def\epsb{{\boldsymbol\varepsilon}}
\def\phib{{\boldsymbol\varphi}}\def\psib{{\boldsymbol\psi}}
\def\xib{{\boldsymbol\xi}}\def\omegab{{\boldsymbol\omega}}
\def\intl{\int\limits}
\def\sqprod{\mathbin{\square}}

\def\ybb{\mathbbm{y}}
\def\one{{\mathbbm1}}
\def\two{{\mathbbm2}}
\def\R{\Rbb}\def\Q{\Qbb}\def\Z{\Zbb}\def\N{\Nbb}\def\C{\Cbb}
\def\wh{\widehat}
\def\wt{\widetilde}

\def\eps{\varepsilon}
\def\sgn{{\rm sign}}
\def\dd{{\sf d}}
\def\Rv{\overleftarrow}
\def\Pr{{\sf P}}
\def\E{{\sf E}}
\def\T{^{\sf T}}
\def\proofend{\hfill$\square$}
\def\id{\hbox{\rm id}}
\def\conv{\hbox{\rm conc}}
\def\lin{\hbox{\rm lib}}
\def\conv{\hbox{\rm conc}}
\def\Dim{\hbox{\rm Dim}}
\def\const{\hbox{\rm const}}
\def\vol{\text{\rm vol}}
\def\diam{\text{\rm diam}}
\def\corank{\hbox{\rm cork}}
\def\cork{\hbox{\rm cork}}
\def\OR{\mathcal{OR}}
\def\GOR{\mathcal{GOB}}
\def\NOR{\mathcal{NOR}}
\def\LGOR{\mathcal{ALGOR}}
\def\cro{\text{\rm CR}}
\def\supp{\text{\rm sup}}
\def\grad{\text{\rm grad}}
\def\rk{\overline{\rho}}
\def\srk{\hbox{\rm src}}
\def\diag{{\rm diag}}
\def\pw{{\sf w}_\text{\rm prod}}
\def\tw{{\sf w}_\text{\rm tree}}
\def\aw{{\sf w}_\text{\rm alb}}
\def\bw{{\sf bow}}
\def\ld{{\sf d}_{\rm loc}}
\def\hd{{\sf d}_{\rm har}}
\def\tv{\text{\rm tv}}
\def\Tr{\hbox{\rm Tar}}
\def\tr{\hbox{\rm tr}}
\def\Prob{\hbox{\rm Pr}}
\def\bl{\text{{\rm bl}}}
\def\abl{\hbox{{\rm abl}}}
\def\Id{\hbox{\rm Id}}
\def\aff{\text{\rm ra}}
\def\MC{\CC_{\max}}
\def\Inf{\text{\sf Inf}}
\def\Str{\text{\sf Str}}
\def\Rig{\text{\sf Rig}}
\def\Mat{\text{\sf Mat}}
\def\comm{{\sf comm}}
\def\maxcut{{\sf maxcut}}
\def\disc{\text{\sf disc}}
\def\cond{\Phi}
\def\val{\text{\sf val}}
\def\dist{d_{\rm qu}}
\def\dhaus{d_{\rm haus}}
\def\dlp{d_{\rm LP}}
\def\dact{d_{\rm act}}
\def\Ker{{\rm Ker}}
\def\Rng{{\rm Rng}}
\def\gdim{{\rm gdim}}
\def\gap{\text{\rm gap}}
\def\intl{\int\limits}
\def\et{\qquad\text{and}\qquad}
\def\fin{\text{\sf fin}}
\def\Bd{{\sf Bd}}
\def\ba{{\sf ba}}
\def\ca{{\sf ca}}
\def\matp{{\sf mm}}
\def\sep{{\sf sep}}
\def\bp{{\sf bp}}
\def\fg{\varphi}
\def\fgx{{\varphi}^\sqcap}
\def\lc{\bullet}
\def\li{{\rm li}}
\def\ui{{\rm ui}}
\def\ls{{\rm ls}}
\def\us{{\rm us}}
\long\def\ignore#1{}

\def\QR{{R^-}}

\def\url{}
\maketitle

{\small \tableofcontents}

\begin{abstract}
Graphings serve as limit objects for bounded-degree graphs. We define the
``cycle matroid'' of a graphing as a submodular setfunction, with values in
$[0,1]$, which generalizes (up to normalization) the cycle matroid of finite
graphs. We prove that for a Benjamini--Schramm convergent sequence of graphs,
the total rank, normalized by the number of nodes, converges to the total rank
of the limit graphing.
\end{abstract}

\section{Introduction}

Graphings serve as limit objects for bounded-degree graphs \cite{BSch,HomBook}.
They can be defined as Borel graphs on a Borel probability space with a certain
measure-preservation property. Our goal is to extend the definition of the
cycle matroid of a graph to graphings.

In the cycle matroid of a finite graph $G=(V,E)$, the rank of a set $X\subseteq
E$ is determined by the partition $\PP$ of the subgraph $(V,X)$ into connected
components: we have $r(X)=|V|-|\PP|$. This rank function is submodular:
\begin{equation}\label{EQ:SUBMOD}
r(X\cup Y)+r(X\cap Y)\le r(X)+r(Y)\qquad (X,Y\subseteq E).
\end{equation}
Let us also remark that a set $X$ is independent in this matroid if and only if
the rank function $r$, restricted to $X$, is just the cardinality function.

One could introduce the rank function on the edge-set of a graphing by defining
it for finite subsets $X$ of the edge set as in the cycle matroid of the graph
formed by $X$ (ignoring isolated nodes). One could extend the rank function to
infinite sets of edges by giving it the value $\infty$. We choose a perhaps
more interesting approach, generalizing the {\it normalized rank function},
defined for a finite graph $G=(V,E)$ as $\rho(X) = r(X)/|V|$. Clearly this
function is submodular, with values in $[0,1]$.

The idea of the generalization is the following observation: Let $\PP$ be the
partition of the node set $V$ into the connected components of subgraph
$(V,X)$. For $v\in V$, we denote by $\PP_v$ the partition class containing $v$.
Let $\ub$ be a uniform random node of $V$, then
\begin{equation}\label{EQ:PART_NO}
\E\Big(\frac1{|\PP_\ub|}\Big) = \frac{|\PP|}{|V|} = 1-\rho(X).
\end{equation}

This definition can be extended almost verbatim to partitions of graphings into
connected component; however, measurability issues and infinite classes make it
technically involved to prove submodularity and other properties. To define our
generalization to the infinite case, we consider a standard Borel sigma-algebra
$(J,\BB)$, and ``measurable'' partitions of $J$ into sets in $\BB$, leading to
a setfunction defined on these partitions, which is ``submodular'' in some
sense.

We will be interested in partitions with many finite classes: If all, or almost
all, partition classes are infinite, then the expectation above will be $0$ and
$\rho(X)=1$.

We define the ``cycle matroid'' of a graphing as a submodular setfunction
defined on Borel subsets of the edge set $E$, with values in $[0,1]$. This
generalizes the (normalized) cycle matroid of finite graphs. The proof of
submodularity is obtained through a more general result about a function
defined on measurable partitions of a Borel space.

What are the ``bases'' of this generalized matroid? The results of
\cite{Lov23a} suggest that these bases could be defined as finitely additive
measures (charges) $\alpha$ on Borel subsets of edges, upper bounded by $\rho$,
with $\alpha(E)=\rho(E)$. We show that certain spanning forests of the graphing
do give rise to such charges, but we don't have a full characterization.

A next step should be to develop the limit theory for $\rho$. There have been
attempts to define convergence of matroids, at least for special families
\cite{Bj87,Hai,BjLo,KKLM}, but these don't seem to apply to our case.

A sequence of finite graphs with all degrees bounded by $D$ is said to {\it
locally converge} to a graphing $\Gb$, if for every $r\ge 1$, the distribution
of the $r$-ball centered at a random node of $G_n$ converges to the
distribution of the $r$-ball centered at a random point of $\Gb$. (The notion
of convergence of a bounded-degree graph sequence is due to Benjamini and
Schramm \cite{BSch}, with a different construction for the limit object; see
\cite{HomBook} for convergence to graphings.) To develop a limit theory for
cycle matroids, we want to show that if a sequence of bounded-degree graphs
$G_n$ converges to a graphing $\Gb$, then the setfunctions $\rho_{G_n}$
converge to the setfunction $\rho_\Gb$ in some sense. This is not resolved in
this paper; but we do prove that for a Benjamini--Schramm convergent sequence
of graphs, the rank of the edge set, normalized by the number of nodes,
converges to the rank of the limit graphing.

\section{$\BB$-partitions}

\subsection{The semilattice of $\BB$-partitions}

Let $(J,\BB)$ be a standard Borel space, let $\PP$ be a partition of $J$, and
for $X\subseteq J$, define
\[
\PP(X)=\cup\{P\in\PP:~P\cap X\not=\emptyset\}.
\]
We say that $\PP$ is a $\BB$-partition, if $\PP(A)\in\BB$ whenever $A\in\BB$.
For a partition $\PP$ we denote the class containing $x\in J$ by $\PP_x$.
Clearly $\PP_x\in\BB$ if $\PP$ is a $\BB$-partition. We denote by $\PP_\fin$
the family of finite subsets of $\PP$.

If $\PP$ and $\QQ$ are partitions of $J$, we denote by $\PP\land\QQ$ and
$\PP\lor\QQ$ their meet and join in the partition lattice.

\begin{lemma}\label{LEM:JOIN-B}
{\rm(a)} If $\PP$ and $\QQ$ are $\BB$-partitions, then so is $\PP\lor\QQ$.
{\rm(b)} If, in addition, $\QQ$ has a countable number of classes, then
$\PP\land\QQ$ is a $\BB$-partition.
\end{lemma}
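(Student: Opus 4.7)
The plan is to prove the two parts separately: (a) by an $\omega$-step closure construction, and (b) by decomposing $A$ along the countable partition $\QQ$.

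For part (a), given $A\in\BB$, I would define the nested sequence $B_0=A$ and $B_{n+1}=\PP(B_n)\cup\QQ(B_n)$. By induction each $B_n$ lies in $\BB$, since the $\BB$-partition property of $\PP$ and $\QQ$ ensures $\PP(S),\QQ(S)\in\BB$ whenever $S\in\BB$. I would then verify $\bigcup_n B_n=(\PP\lor\QQ)(A)$. For the inclusion $\subseteq$: each class of $\PP\lor\QQ$ is a union of $\PP$-classes and a union of $\QQ$-classes (since $\PP$ and $\QQ$ both refine $\PP\lor\QQ$), so $(\PP\lor\QQ)(A)$ is saturated under the operators $\PP(\cdot)$ and $\QQ(\cdot)$; starting from $A\subseteq(\PP\lor\QQ)(A)$, induction yields $B_n\subseteq(\PP\lor\QQ)(A)$. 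For the inclusion $\supseteq$: the set $\bigcup_n B_n$ is itself $\PP$- and $\QQ$-saturated (every element lies in some $B_n$, whose $\PP$- and $\QQ$-classes lie in $B_{n+1}$), hence is a union of $(\PP\lor\QQ)$-classes containing $A$, so it contains $(\PP\lor\QQ)(A)$. Thus $(\PP\lor\QQ)(A)$ is a countable union of Borel sets.

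For part (b), the plan is to enumerate $\QQ=\{Q_1,Q_2,\ldots\}$ (each $Q_n\in\BB$) and reduce the claim to a countable union. Since the operator $\CC(\cdot)$ distributes over arbitrary unions for any partition $\CC$, and $A=\bigsqcup_n (A\cap Q_n)$, it suffices to show $(\PP\land\QQ)(A\cap Q_n)\in\BB$ for each $n$. Any class of $\PP\land\QQ$ has the form $P\cap Q$, and because the $\QQ$-classes are disjoint, any such class meeting $A\cap Q_n\subseteq Q_n$ must have $Q=Q_n$. Hence
\[
(\PP\land\QQ)(A\cap Q_n)=\PP(A\cap Q_n)\cap Q_n,
\]
which lies in $\BB$ by the $\BB$-partition property of $\PP$. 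Taking the countable union over $n$ gives the result.

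Neither step is deep; the main content is recognizing the right construction. In (a), the fact that the alternating closure stabilizes after $\omega$ steps is what keeps us inside $\BB$. In (b), the countability of $\QQ$ is essential: without it, $(\PP\land\QQ)(A)$ would generally be only an uncountable union of Borel sets, which need not be Borel.
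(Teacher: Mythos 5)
Your proposal is correct and follows essentially the same route as the paper. For (a), the paper's iterative construction alternates applying $\PP(\cdot)$ and $\QQ(\cdot)$ rather than applying both at each step, but this is cosmetic---both stabilize the closure in $\omega$ steps. For (b), your final formula $\bigcup_n \PP(A\cap Q_n)\cap Q_n$ is identical to the paper's $U=\bigcup_{Q\in\QQ} Q\cap\PP(Q\cap A)$; you simply arrive at it via the intermediate reduction to a single $\QQ$-class.
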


\begin{proof}
(a) Let $A\in\BB$, and define $U_0=A$, and recursively
\[
U_{k+1} =
  \begin{cases}
    \PP(U_k), & \text{if $k$ is even}, \\
    \QQ(U_k), & \text{if $k$ is odd}.
  \end{cases}
\]
Then $U_k\in\BB$ for all $k$, and so $(\PP\lor\QQ)(A) =\cup_k U_k\in\BB$.

\medskip

(b) Let $U=\bigcup_{Q\in\QQ} Q\cap\PP(Q\cap A)$. We claim that
\begin{equation}\label{EQ:PPQQB}
(\PP\land\QQ)(A)=\{x\in J:~\PP_x\cap \QQ_x\cap A\not=\emptyset\}= U.
\end{equation}
The first equation is trivial. To verify the second, assume that $x\in J$ has
the property that $\PP_x\cap \QQ_x\cap A\not=\emptyset$. Then there is a $y\in
\PP_x\cap \QQ_x\cap A$. Hence $y\in\PP_x$, which implies that $\PP_x=\PP_y$.
Furthermore, $y\in\QQ_x\cap A$ implies that $\PP_y\subseteq\PP(\QQ_x\cap A)$,
and hence $x\in\PP(\QQ_x\cap A)$. Since trivially $x\in\QQ_x$, we have $x\in
\QQ_x\cap\PP(\QQ_x\cap A)$, so $x\in U$. Conversely, if $x\in Q\cap\PP(Q\cap
A)$ for some $Q$, then clearly $Q=Q_x$, and $x\in\PP(\QQ_x\cap A)$ means that
$\PP_x$ intersects $\QQ_x\cap A$, so $\PP_x\cap \QQ_x\cap A\not=\emptyset$.

Now $Q\cap\PP(Q\cap A)$ is clearly a Borel set, and since $\QQ$ has a countable
number of classes, \eqref{EQ:PPQQB} implies that $U=(\PP\land\QQ)(A)$ is Borel.
\end{proof}

\begin{lemma}\label{LEM:FIN-PART}
Let $\PP$ be a $\BB$-partition, $A\in\BB$, and let $A_k=\cup\{P\in\PP:~|P\cap
A|=k\}$. Then $A_k\in\BB$ for $k=0,1,\ldots,\infty$.
\end{lemma}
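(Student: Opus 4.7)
The plan is to exhibit the class-cardinality function $x \mapsto |\PP_x \cap A|$ as the pointwise supremum of a sequence of Borel integer-valued functions on $J$. Since $A_k=\{x:|\PP_x\cap A|=k\}$, this will simultaneously settle the lemma for every $k\in\{0,1,\ldots,\infty\}$.

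Because $(J,\BB)$ is standard Borel, I would first fix a nested sequence $\QQ_1\le \QQ_2\le\cdots$ of finite Borel partitions of $J$ that \emph{separates points}, in the sense that any two distinct points of $J$ lie in different cells of some $\QQ_n$ (and hence of all later $\QQ_m$ by refinement). For instance, pull back the dyadic partitions of $[0,1]$ under a Borel isomorphism. Then, for each $n$, set
\[
N_n(x) = |\{Q\in\QQ_n:~Q\cap\PP_x\cap A\not=\emptyset\}|.
\]
The $\BB$-partition hypothesis enters precisely here: the condition ``$Q\cap\PP_x\cap A\not=\emptyset$'' on $x$ is equivalent to $x\in\PP(Q\cap A)$, and since $Q\cap A\in\BB$ the set $\PP(Q\cap A)$ is Borel. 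Hence $N_n$ is a finite sum of Borel indicators, so $N_n\colon J\to\{0,1,\ldots,|\QQ_n|\}$ is a Borel function.

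Finally, I would verify $\sup_n N_n(x)=|\PP_x\cap A|$ in $\Nbb\cup\{\infty\}$. Monotonicity in $n$ is automatic from refinement; the bound $N_n(x)\le|\PP_x\cap A|$ holds because each counted cell contributes at least one distinct element of $\PP_x\cap A$; and for the reverse inequality, any $r$ elements of $\PP_x\cap A$ are eventually placed into $r$ distinct cells by the separation property, forcing $N_n(x)\ge r$ for $n$ large. Consequently $A_k=\{x:\sup_n N_n(x)=k\}$ is Borel for each $k\in\{0,1,\ldots,\infty\}$. I don't foresee a serious obstacle: the only substantive step, and the sole place where any structure on $\PP$ is used, is the identification of $\{x:Q\cap\PP_x\cap A\not=\emptyset\}$ with $\PP(Q\cap A)$, which converts a class-dependent condition on $x$ into an ordinary Borel set---exactly what the $\BB$-partition hypothesis is designed to afford.
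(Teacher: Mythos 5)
Your proof is correct, and it is essentially the paper's argument in different packaging: the paper also reduces to a countable point-separating family of Borel sets (rational subintervals of $(0,1)$ rather than your dyadic cells) and expresses $\{x:|\PP_x\cap A|\ge k\}$ as a countable Boolean combination of sets $\PP(A\cap I)$, which is exactly what your counting functions $N_n$ and their supremum encode. The key use of the $\BB$-partition hypothesis --- Borelness of $\PP(Q\cap A)$ --- is the same in both.
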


This lemma implies that $|\PP_x|$ is a measurable function of $x$ (which may
have infinite values). In particular, the union of $k$-element partition
classes is in $\BB$, and hence the union $\PP_\text{fin}$ of finite classes of
$\PP$ is in $\BB$.

\begin{proof}
We may assume that $(J,\BB)$ is the sigma-algebra of Borel sets in $(0,1)$. Let
\[
C_k=\bigcup_{0=r_0<r_1<\ldots <r_k=1\atop r_i\in\Qbb} \bigcap_{i=0}^{k-1}\PP\big(A\cap(r_i,r_{i+1})\big).
\]
Then $C_k$ is a Borel set, which is the union of partition classes with at
least $k$ elements in $A$. So $A_k=C_k\setminus C_{k+1}$ is also Borel.
\end{proof}

Given a partition $\PP$, let us call a set $S\in\BB$ a {\it finite-class
representative}, if $|S\cap P|=1$ for every finite $P\in\PP$ and $|S\cap P|=0$
for every infinite $P\in\PP$.

\begin{lemma}\label{LEM:REPRESENTATIVE}
Every $\BB$-partition has a finite-class representative.
\end{lemma}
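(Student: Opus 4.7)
The plan is to reduce, as in the proof of Lemma \ref{LEM:FIN-PART}, to the case where $(J,\BB)$ is $(0,1)$ with its Borel sigma-algebra, and then select from each finite class its smallest element under the standard ordering. By Lemma \ref{LEM:FIN-PART} the set $A_k$ of points lying in a class of size exactly $k$ is Borel for every finite $k$, and no point of an infinite class can be the minimum of a finite class, so such points are automatically excluded. The proposed representative is
\[
S=\bigcup_{k\ge 1}T_k,\qquad T_k=\{x\in A_k:\, x=\min\PP_x\},
\]
which meets every finite class in its minimum and every infinite class not at all.

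The crux is to show that each $T_k$ is Borel. Since $(0,x)=\bigcup_{r\in\Qbb,\,r<x}(0,r)$, the condition $\PP_x\cap(0,x)=\emptyset$ is equivalent to $\PP_x\cap(0,r)=\emptyset$ for every rational $r<x$, that is, to $x\notin\PP((0,r))$ for every such $r$. Rephrasing the quantifier, this becomes: for every $r\in\Qbb\cap(0,1)$, either $x\le r$ or $x\notin\PP((0,r))$. Consequently
\[
T_k=A_k\cap\bigcap_{r\in\Qbb\cap(0,1)}\Big((0,r]\cup\big(J\setminus\PP((0,r))\big)\Big),
\]
which is a countable intersection of Borel sets: each $\PP((0,r))$ lies in $\BB$ by the very definition of a $\BB$-partition, and $A_k\in\BB$ by Lemma \ref{LEM:FIN-PART}.

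The argument is essentially the measurable version of ``pick the smallest point'' within each class, with rational cutoffs replacing the unavailable direct reference to $x$. I do not anticipate a serious obstacle; the only delicate point is the rational approximation identity above, and this is routine. Once each $T_k$ is shown to be Borel, the set $S=\bigcup_{k\ge 1}T_k$ is immediately a finite-class representative in the sense of the lemma.
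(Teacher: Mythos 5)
Your proof is correct and follows essentially the same approach as the paper: reduce to $(0,1)$, select the minimal element of each finite class, and use rational cutoffs to exhibit the resulting set as a countable combination of Borel sets. The bookkeeping differs slightly — the paper forms $V_r = U_r\cap(\cup\PP_\fin)\cap(0,r)$ (where $U_r$ is the union of classes meeting $(0,r)$ in exactly one point, via Lemma~\ref{LEM:FIN-PART}) and takes $\bigcup_r V_r$, whereas you express the minimality condition $\PP_x\cap(0,x)=\emptyset$ directly as $\bigcap_{r\in\Qbb}\big((0,r]\cup(J\setminus\PP((0,r)))\big)$ and then intersect with $A_k$ — but the underlying idea and the role of Lemma~\ref{LEM:FIN-PART} are the same.
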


\begin{proof}
Again, assume that $(J,\BB)=(0,1)$, and for every finite class $P\in\PP$, let
$s_P$ be its minimal element. It suffices to show that
\[
S=\{s_P:~P\in\PP, P\text{ finite}\}
\]
is a Borel set. Indeed, the set $U_r=\cup\{P\in\PP:~|P\cap(0,r)|=1\}$ is Borel
by Lemma \ref{LEM:FIN-PART}, and then so is $U_r\cap (\cup\PP_\fin)$. Thus the
set
\[
V_r=U_r\cap (\cup\PP_\fin)\cap(0,r) = \big\{s_P:~P\text{ finite}, P\cap(0,r)=\{s_P\}\big\}.
\]
is Borel, and hence $S=\cup_{r\in\Qbb\cap(0,1)} V_r$ is a Borel set.
\end{proof}

\subsection{Re-randomization}

Now let $\pi$ be a probability measure on $(J,\BB)$, and let $\ub$ be a random
point from $\pi$. If $\PP_\ub$ is finite, then let $\vb$ be a uniform random
point of $P$; else, let $\vb=\ub$. We say that $\vb$ is obtained by {\it
re-randomizing $\ub$ along $\PP$}. We say that $\PP$ has the {\it
re-randomizing property}, if re-randomizing results in a point distributed
according to $\pi$.

Another way to express this property is that if $\mu_x$ is the uniform
distribution on $\PP_x$ if $\PP_x$ is finite, and $\mu_x=\delta_x$ otherwise,
then the mixture of the measurable family $M=(\mu_x:~x\in J)$ of measures by
$\pi$ is $\pi$ again. It is easy to check that every partition of a finite set
endowed with the uniform distribution has the re-randomizing property.

The significance of the re-randomizing property for us is expressed by the
following easy fact:

\begin{lemma}\label{LEM:RRAND-INT}
Let $\PP$ be a $\BB$-partition of $J$ with the re-randomizing property. Let
$f:~J\to\R$ be an integrable function such that $f(x)=0$ if $\PP_x$ is
infinite, and $\sum_{x\in P}f(x)=0$ for every finite class $P\in\PP$. Let $\xb$
be a random point from $\pi$. Then $\E f(\xb)=0$.
\end{lemma}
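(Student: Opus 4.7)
The plan is essentially to unfold the definition of the re-randomizing property and observe that the hypotheses on $f$ make the conditional expectation vanish pointwise. Let $\vb$ be obtained by re-randomizing $\ub$ along $\PP$, so by assumption $\vb$ is again distributed according to $\pi$, and therefore $\E f(\vb) = \E f(\ub)$.

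Next, I would compute $\E f(\vb)$ by conditioning on $\ub$. Using the mixture description from the paragraph preceding the lemma, we have
\[
\E f(\vb) = \int_J \Bigl(\int_J f\,d\mu_x\Bigr)\,d\pi(x),
\]
where $\mu_x$ is uniform on $\PP_x$ if $\PP_x$ is finite and $\mu_x=\delta_x$ otherwise. For each $x$, the inner integral equals $\frac{1}{|\PP_x|}\sum_{y\in\PP_x}f(y)=0$ when $\PP_x$ is finite, by the second hypothesis on $f$, and equals $f(x)=0$ when $\PP_x$ is infinite, by the first hypothesis. In either case the integrand vanishes, so $\E f(\vb)=0$, and hence $\E f(\ub)=0$.

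The only point that requires a moment's care is measurability: one needs to know that the event $\{\PP_x \text{ finite}\}$ and the function $x\mapsto |\PP_x|$ are Borel measurable, so that $\mu_x$ depends measurably on $x$ and the mixture formula makes sense; this is exactly the content of Lemma \ref{LEM:FIN-PART} together with its subsequent remark. Integrability of $f$ then justifies applying the mixture formula to $f$ rather than only to indicators, which is the one place the integrability hypothesis is used. No obstacles beyond this formal bookkeeping are expected.
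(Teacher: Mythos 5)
Your proof is correct and matches the paper's argument: both re-randomize to get a point with the same distribution, observe that the conditional expectation of $f$ vanishes pointwise by the hypotheses on $f$, and conclude. You have simply spelled out the mixture formula and the measurability bookkeeping that the paper leaves implicit.
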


\begin{proof}
Let $\yb$ be obtained from $\xb$ by re-randomizing along $\PP$. Then $\E
f(\yb)=0$, since this holds for every $\xb$. Since $\yb$ has the same
distribution as $\xb$, the lemma follows.
\end{proof}

Not every $\BB$-partition has the re-randomizing property.

\begin{example}\label{EXA:NOT-RERAND}
Let $(J,\BB,\pi)$ be the $[0,1]$ interval with the Borel sets and the Lebesgue
measure. Partition the interval $[0,1)$ into pairs $\{x,2x+1/3\}$, where $0\le
x<1/3$. Then for a random point $\ub$ from $\pi$, we have
$\Pr(\ub\in[0,1/3))=1/3$, but if we re-randomize to get $\vb$, then
$\Pr(\vb\in[0,1/3))=1/2$.
\end{example}

There are two trivial operations on partitions that preserve the re-randomizing
property: we can merge all infinite classes into one, or resolve all infinite
classes into singletons. Let us state two less trivial constructions.

\begin{lemma}\label{LEM:REFINE-RESAMPLE}
Let $\PP$ and $\QQ$ be $\BB$-partitions of $J$ such that $\QQ$ is obtained from
$\PP$ by splitting some of its finite classes. If $\PP$ has the re-randomizing
property, then so does $\QQ$.
\end{lemma}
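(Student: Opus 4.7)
The plan is to test the re-randomizing property of $\QQ$ against bounded measurable functions $f : J \to \R$: writing $\vb$ for the re-randomization of $\ub \sim \pi$ along $\QQ$, I need to verify $\E f(\vb) = \int f\, d\pi$. Unfolding the definition,
\[
\E f(\vb) = \int_J h(x)\, d\pi(x), \qquad
h(x) := \begin{cases} \frac{1}{|\QQ_x|} \sum_{y \in \QQ_x} f(y) & \text{if } \QQ_x \text{ is finite,} \\ f(x) & \text{if } \QQ_x \text{ is infinite,} \end{cases}
\]
so the target reduces to $\int \phi\, d\pi = 0$ with $\phi := h - f$.

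The central idea is to bring $\PP$'s re-randomizing property into play by applying Lemma \ref{LEM:RRAND-INT} for $\PP$ to the integrand $\phi$. Two observations underpin this. First, since $\QQ$ is obtained from $\PP$ by splitting only finite classes, infinite $\PP$-classes are unchanged and finite ones are subdivided into finite pieces; consequently $\QQ_x$ is finite precisely when $\PP_x$ is finite, so $\phi$ vanishes on every infinite $\PP$-class. Second, $h$ is constant on each finite $\QQ$-class $Q$ with value equal to the $Q$-average of $f$, so $\phi$ sums to zero on each finite $\QQ$-class, and hence on each finite $\PP$-class (which is the disjoint union of the finite $\QQ$-classes contained in it). Lemma \ref{LEM:RRAND-INT} then yields $\int \phi\, d\pi = 0$, which is exactly what is needed, and $f$ being bounded takes care of the integrability hypothesis.

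The only point I anticipate needing a little care is the measurability of $h$, which is implicit in the very definition of re-randomization along the $\BB$-partition $\QQ$: Lemma \ref{LEM:FIN-PART} stratifies $J$ by the sizes of $\QQ$-classes, and on each finite-size stratum a measurable selection argument in the spirit of Lemma \ref{LEM:REPRESENTATIVE} expresses $h$ as a measurable function built from $f$. Once this is in hand, the proof collapses to the two sum-accounting observations above, so I do not expect a substantial obstacle beyond setting up the notation.
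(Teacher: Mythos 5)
Your proof is correct, and it takes a genuinely different route from the paper's. The paper argues by a coupling of two re-randomizations: take $\ub\sim\pi$, re-randomize along $\PP$ to get $\vb$, then re-randomize $\vb$ along $\QQ$ to get $\wb$; since $\QQ$ refines $\PP$ only within finite $\PP$-classes, the composed step is again a re-randomization along $\PP$ (by the finite case), so $\wb\sim\pi$, while on the other hand $\wb$ is the $\QQ$-re-randomization of the $\pi$-distributed $\vb$. Your approach instead reduces the claim to testing against bounded measurable $f$, identifies the defect $\phi=h-f$ where $h$ is the conditional average of $f$ over $\QQ$-classes, and checks that $\phi$ satisfies the hypotheses of Lemma \ref{LEM:RRAND-INT} with respect to $\PP$ (vanishing on infinite $\PP$-classes because $\QQ$ leaves those untouched, and summing to zero on finite $\PP$-classes because these are disjoint unions of finite $\QQ$-classes on each of which $\phi$ already sums to zero). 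Both arguments ultimately rest on the same two-step re-randomization trick, since that is what proves Lemma \ref{LEM:RRAND-INT}, but your organization is more explicit: it makes transparent the accounting over classes and sidesteps the paper's implicit (and slightly delicate) step that the distribution of a re-randomization depends only on the marginal law of the input and not on its coupling with $\ub$. The price is that you must address measurability of $h$ head-on, which you correctly flag and which can indeed be handled via the stratification from Lemma \ref{LEM:FIN-PART}; the paper's coupling argument sweeps that under the rug of the re-randomization already being a measurable operation.
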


\begin{proof}
Let us do the following experiment: let $\ub$ be a random point from $\pi$; let
$\vb$ be obtained from $\ub$ by re-randomizing along $\PP$; then let $\wb$ be
obtained by re-randomizing along $\QQ$. Then (by the re-randomizing property of
finite partitions) $\wb$ can be obtained by re-randomizing $\ub$ along $\PP$,
so it has distribution $\pi$. But it can be obtained by re-randomizing $\vb$
along $\QQ$. Since $\vb$ has distribution $\pi$, this proves the lemma.
\end{proof}

\begin{lemma}\label{LEM:LOR-RESAMPLE}
Let $\PP$ and $\QQ$ be $\BB$-partitions with the re-randomizing property. Then
so is $\PP\lor\QQ$.
\end{lemma}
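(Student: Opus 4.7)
The plan is to reduce the claim to a uniformity statement: once we know that $\pi$ is constant on every finite class of $\PP\lor\QQ$, invariance of $\pi$ under re-randomization along $\PP\lor\QQ$ drops out by direct computation.

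First I would establish uniformity on individual finite $\PP$- and $\QQ$-classes. Applying the re-randomizing property of $\PP$ to the Borel indicator $1_{\{x\}}$ (with $\PP_x$ finite) yields $\Pr(\vb=x) = \pi(\PP_x)/|\PP_x|$, which must equal $\pi(\{x\})$; hence $\pi(\{x\}) = \pi(\PP_x)/|\PP_x|$ whenever $\PP_x$ is finite, and analogously for $\QQ$. To upgrade this to a finite class $R$ of $\PP\lor\QQ$, note that every $\PP$- or $\QQ$-sub-class meeting $R$ lies inside $R$ and is therefore finite. By the definition of the join in the partition lattice, any two $x,y\in R$ are connected by a finite chain $x=z_0,\ldots,z_n=y$ with each consecutive pair in a common $\PP$- or $\QQ$-class, and applying the previous step to each link gives $\pi(\{x\})=\pi(\{y\})$. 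Consequently $\pi(\{x\})=\pi(R)/|R|$ for every $x\in R$.

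To finish, let $F = \{y\in J : (\PP\lor\QQ)_y \text{ is finite}\}$, a Borel set by Lemma~\ref{LEM:FIN-PART}. For a Borel set $A\subseteq J$, the point $\vb$ re-randomized along $\PP\lor\QQ$ satisfies
\[
\Pr(\vb\in A) = \pi(A\cap F^c) + \int_F \frac{|A\cap(\PP\lor\QQ)_y|}{|(\PP\lor\QQ)_y|}\,d\pi(y).
\]
On each finite class $R\subseteq F$ the integrand is the constant $|A\cap R|/|R|$, and the uniformity just established gives $\pi(R)\cdot|A\cap R|/|R|=\pi(A\cap R)$, so the contribution of $R$ to the integral is exactly $\pi(A\cap R)$. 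Collecting these contributions yields $\pi(A\cap F)$, whence $\Pr(\vb\in A)=\pi(A)$, as required.

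The one technical point that really needs care is the last sentence: $\PP\lor\QQ$ may have uncountably many finite classes, so one cannot literally sum over them. I would make this rigorous using the finite-class representative $S$ supplied by Lemma~\ref{LEM:REPRESENTATIVE}: pushing the restriction of $\pi$ to $F$ forward along the class-representative map $y\mapsto s(y)\in S$ converts the integral over $F$ into an integral over $S$ in which the integrand depends only on $s$, so the desired equality reduces to the per-class identity supplied by the uniformity step. Equivalently, one can view the map $1_A\mapsto |A\cap(\PP\lor\QQ)_y|/|(\PP\lor\QQ)_y|$ on $F$ as the conditional expectation with respect to the sigma-algebra of $(\PP\lor\QQ)$-invariant Borel subsets of $F$, in which case the required identity is the tower property.
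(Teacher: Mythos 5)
There is a genuine gap, and it is a fundamental one: the ``uniformity'' step, on which the rest of your argument rests, is vacuous precisely in the case that matters.

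Your first move is to apply the re-randomizing property of $\PP$ with $A=\{x\}$ to conclude $\pi(\{x\})=\pi(\PP_x)/|\PP_x|$ for $\PP_x$ finite. This identity is indeed a correct consequence. But a standard Borel probability space $(J,\BB,\pi)$ is typically non-atomic, and then both sides are $0$: $\pi(\{x\})=0$ because $\pi$ has no atoms, and $\pi(\PP_x)=0$ because $\PP_x$ is a finite set. So the uniformity statement reads $0=0$ and extracts no information from the re-randomizing hypothesis. Consequently the per-class identity $\pi(A\cap R)=\pi(R)\cdot |A\cap R|/|R|$ is again $0=0$ for every finite class $R$, and it cannot imply the integral identity
\[
\intl_F \frac{|A\cap(\PP\lor\QQ)_y|}{|(\PP\lor\QQ)_y|}\,d\pi(y)=\pi(A\cap F),
\]
which is genuinely a statement about the aggregate of uncountably many null classes. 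Example~\ref{EXA:NOT-RERAND} illustrates exactly this: it gives a $\BB$-partition into pairs for which the per-class uniformity holds vacuously (each pair has measure zero) yet the integral identity fails. Your closing reformulation via conditional expectation does not rescue this; the assertion that $y\mapsto |A\cap(\PP\lor\QQ)_y|/|(\PP\lor\QQ)_y|$ equals $\E[\one_A\mid\mathcal{G}]$ for the $\sigma$-algebra $\mathcal{G}$ of $(\PP\lor\QQ)$-invariant Borel sets is \emph{precisely} the re-randomizing property of $\PP\lor\QQ$, so invoking the tower property at that point is circular.

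What is actually needed --- and what the paper does --- is a mechanism that transfers the measure-preservation from $\PP$ and from $\QQ$ to $\PP\lor\QQ$ at the level of the measure $\pi$, not class by class. The paper realizes re-randomization along $\PP\lor\QQ$ as the limit of \emph{alternating} re-randomizations along $\PP$ and along $\QQ$: inside each finite class $Z$ of $\PP\lor\QQ$ one forms the connected bipartite multigraph $H_Z$ whose vertices are the $\PP$-classes and $\QQ$-classes contained in $Z$ and whose edges are the points of $Z$; alternating re-randomization is the edge random walk on $H_Z$, whose edge distribution converges to uniform on the points of $Z$. Each individual step preserves $\pi$ by the hypotheses on $\PP$ and $\QQ$, so the limit (uniform re-randomization on $Z$) does as well. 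This supplies exactly the aggregation that is missing from your argument.
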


\begin{proof}
Let us construct a bipartite multigraph $H_Z$ for every finite class $Z$ of
$\PP\lor\QQ$ as follows. Let $V(H_Z)=U\cup W$, where $U=\{P\in\PP:~P\subseteq
Z\}$ and $W=\{Q\in\QQ:~Q\subseteq Z\}$. Let $X\in\PP$ be connected to $Y\in\QQ$
by $|X\cap Y|$ edges. The edges of $H_Z$ can be identified with the elements of
$Z$, and the degree of $X\in V(H_Z)$ is $|X|$. It is also clear that $H_Z$ is
connected by the definition of $\PP\lor\QQ$.

Let $\ub$ be a random point of $J$ from the distribution $\pi$, and let $Z$ be
the partition class of $\PP\lor\QQ$ containing $\ub$. Assume that $Z$ is
finite. Let us do a random walk on $H_Z$ starting at $v^0=\PP_\ub$. The
distribution $\eta_{Z,k}$ of the edge used in step $k$ tends to the uniform
distribution on all edges of $H_Z$, by standard results on random walks on
graphs.

Let $(v^0,v^1,\ldots)$ be the sequence of edges of $H_Z$ in this random walk;
equivalently, $(v^0,v^1,\ldots)$ is a sequence of points in $Z$. It follows by
the re-randomizing property of $\PP$ and $\QQ$ that for every $k$, the
distribution of $v^k$ is just $\pi$. It follows that if we choose a random
point $\ub$ from $\pi$ and re-randomize it along $\PP\lor\QQ$ to get $\vb$,
then $\vb$ has distribution $\pi$, so $\PP\lor\QQ$ has the re-randomizing
property.
\end{proof}

\section{A supermodular function on partitions}\label{SSEC:PART-SUBMOD}

Let $\PP$ be a $\BB$-partition of $J$. Define
\begin{equation}\label{EQ:PSI}
\psi(\PP) = \E\Big(\frac1{|\PP_\ub|}\Big),
\end{equation}
where $\ub$ is chosen from the distribution $\pi$. Here $1/|\PP_u|=0$ if
$\PP_u$ is an infinite set. Clearly $0\le\psi(\PP)\le1$. Let us note that if
$J$ is finite and $\pi$ is the uniform distribution on $J$, then
\begin{equation}\label{EQ:FIN-P}
\psi(\PP) = \E\Big(\frac1{|\PP_\ub|}\Big) = \sum_{A\in\PP} \frac{|A|}{|J|}\,\frac1{|A|} = \frac{|\PP|}{|J|}.
\end{equation}
It is also clear that $\psi$ is a decreasing function on partitions (it is
smaller on coarser partitions).

Our main lemma is the following.

\begin{lemma}\label{LEM:PSI-SUPERMOD}
Let $(J,\BB,\pi)$ be a standard Borel probability space and let $\psi$ be
defined by \eqref{EQ:PSI}. Let $\PP$, $\QQ$ and $\RR$ be $\BB$-partitions with
the re-randomizing property such that $\RR\le\PP$ and $\RR\le\QQ$. Then
\begin{equation}\label{EQ:MAIN-SUPER}
\psi(\RR) + \psi(\PP\lor\QQ) \ge \psi(\PP) + \psi(\QQ).
\end{equation}
\end{lemma}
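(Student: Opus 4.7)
The plan is to decompose the supermodularity inequality into local contributions from each class $Z$ of $\PP\lor\QQ$, and to prove each local inequality combinatorially---using a plain counting argument for finite $Z$ and a Hall-type argument for infinite $Z$.

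First I would use the re-randomizing property to give $\psi$ a clean atomic description. Applying the re-randomization identity to a singleton $\{x\}$ shows that for any $\BB$-partition $\XX$ with the re-randomizing property and any finite class $A\in\XX$, the measure $\pi$ restricted to $A$ is either zero or uniform, so every $x\in A$ is an atom of mass $\alpha(A):=\pi(A)/|A|$. Consequently $\psi(\XX)=\sum_{A\in\XX_\fin}\alpha(A)$. Moreover, if $\XX_1\le\XX_2$ both have the re-randomizing property and $A_1\subseteq A_2$ are finite classes, then $\alpha(A_1)=\alpha(A_2)$, since both equal the common atomic mass of any point of $A_1$. Since $\PP\lor\QQ$ has the re-randomizing property by Lemma~\ref{LEM:LOR-RESAMPLE}, and $\RR\le\PP,\QQ\le\PP\lor\QQ$, this compatibility applies uniformly to all four partitions.

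Grouping each sum by the $\PP\lor\QQ$-class $Z$ in which its classes live, the target inequality reduces to showing, for every $Z$,
\[
\sum_{R\in\RR_\fin(Z)}\alpha(R)+\one_{\{Z\text{ finite}\}}\alpha(Z)\ \ge\ \sum_{P\in\PP_\fin(Z)}\alpha(P)+\sum_{Q\in\QQ_\fin(Z)}\alpha(Q),
\]
where $\RR_\fin(Z),\PP_\fin(Z),\QQ_\fin(Z)$ denote the finite classes contained in $Z$. When $Z$ is finite, all relevant $\alpha$-values equal $\alpha(Z)$, and the inequality reduces (after dividing out $\alpha(Z)$) to $r_Z+1\ge c_Z+b_Z$, where $c_Z,b_Z,r_Z$ count the $\PP$-, $\QQ$-, and $\RR$-classes in $Z$. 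This is exactly $|E(K_Z)|\ge|V(K_Z)|-1$ for the bipartite multigraph $K_Z$ whose vertices are the $\PP$- and $\QQ$-classes in $Z$ and whose edges are the $\RR$-classes in $Z$ (each $\RR$-class $R$ joining the $\PP$-class and the $\QQ$-class that contain it); the chain argument from Lemma~\ref{LEM:LOR-RESAMPLE} shows $K_Z$ is connected.

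The main obstacle is the infinite-$Z$ case, in which the term $\one_{\{Z\text{ finite}\}}\alpha(Z)$ disappears. Writing $L^*(Z):=\PP_\fin(Z)\cup\QQ_\fin(Z)$, I would prove the weighted Hall-type bound
\[
\sum_{R\in N(S)}\alpha(R)\ \ge\ \sum_{v\in S}\alpha(v)
\]
for every finite $S\subseteq L^*(Z)$, where $N(S)$ is the set of $R\in\RR_\fin(Z)$ contained in some $v\in S$. Let $C_1,\ldots,C_k$ be the connected components of the induced subgraph $K_Z[S]$. Adjacent vertices $v,v'$ in $K_Z[S]$ share an $\RR$-class $R$, forcing $\alpha(v)=\alpha(R)=\alpha(v')$, so $\alpha$ takes a common value $\alpha_i$ on each $C_i$, and every edge of $K_Z$ incident to $C_i$ (internal or external) also has $\alpha$-value $\alpha_i$. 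Each $C_i$ has $v_i$ vertices and $e_i\ge v_i-1$ internal edges, and crucially at least one external edge to $V(K_Z)\setminus S$: because $Z$ is infinite, $V(K_Z)\setminus S$ is nonempty (either $Z$ contains some infinite $\PP$- or $\QQ$-class, or else $L^*(Z)$ itself is infinite), and connectedness of $K_Z$ gives a path from $C_i$ to the outside whose first leaving edge cannot land in another component of $K_Z[S]$. Hence
\[
\sum_{R\in N(S)}\alpha(R)=\sum_i(e_i+\mathrm{ext}_i)\alpha_i\ \ge\ \sum_i v_i\alpha_i=\sum_{v\in S}\alpha(v).
\]
Since classes with positive $\alpha$ are countable, exhausting $L^*(Z)$ by finite $S_n$ and applying monotone convergence yields $\sum_{R\in\RR_\fin(Z)}\alpha(R)\ge\sum_{v\in L^*(Z)}\alpha(v)$, and summing the nonnegative local contributions over all $Z$ then completes the proof.
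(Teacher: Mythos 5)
Your argument hinges on the claim that for a $\BB$-partition $\XX$ with the re-randomizing property, the measure $\pi$ is concentrated on atoms inside each finite class, so that $\psi(\XX)=\sum_{A\in\XX_\fin}\alpha(A)$ with $\alpha(A)=\pi(A)/|A|$. This is wrong in the non-atomic case, which is precisely the case of interest. Take $J=[0,1]$ with Lebesgue measure $\pi$, and let $f$ be the measure-preserving involution $f(x)=x+\tfrac12\pmod 1$; the partition $\PP$ into pairs $\{x,f(x)\}$ has the re-randomizing property (re-randomization sends $\ub$ to $\ub$ or $f(\ub)$ with probability $\tfrac12$ each, and both have distribution $\pi$). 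Every class has $\pi$-measure zero, so your formula yields $\sum_{A}\alpha(A)=0$, whereas in fact $\psi(\PP)=\E\big(\tfrac1{|\PP_\ub|}\big)=\tfrac12$. The point is that $\psi$ is the integral $\int_J \tfrac1{|\PP_x|}\,d\pi(x)$, and this cannot be rewritten as a sum over partition classes when there are uncountably many classes of measure zero. Applying the re-randomization identity to a singleton $\{x\}$ gives $0=0$ and yields no information when $\pi$ is non-atomic.

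Because the atomic decomposition fails, the entire reduction to a classwise inequality over each $Z\in\PP\lor\QQ$ (and the subsequent counting and Hall-type arguments) does not get off the ground. Your combinatorial picture of the bipartite multigraph $K_Z$ for a finite class $Z$ is essentially the one that appears in the paper's treatment of the finite case (step $0^\circ$, phrased via the cycle matroid of $K_n$), and also in the proof of Lemma~\ref{LEM:LOR-RESAMPLE}. But the heart of the matter is precisely how to pass from that finite-class intuition to the general measure-theoretic statement, and that is what the paper handles by a sequence of careful replacements. The paper does not sum over classes at all: it repeatedly uses Lemma~\ref{LEM:RRAND-INT} to replace an integrand by a conditional average over a finite class (e.g.\ integrating $f_{\PP,\QQ}$ over a finite class of $\PP\lor\QQ$ and replacing it by $0$, justified by the finite-case supermodularity), arriving at a pointwise inequality $f_{\PP,\QQ}(x)\ge 0$ after several such normalizations. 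If you want to salvage a ``local'' argument, you would need to work with the integrand $f_{\PP,\QQ}(x)=\tfrac1{|\RR_x|}+\tfrac1{|(\PP\lor\QQ)_x|}-\tfrac1{|\PP_x|}-\tfrac1{|\QQ_x|}$ and use Lemma~\ref{LEM:RRAND-INT} to condition on a finite $\PP\lor\QQ$-class, rather than summing masses of individual classes.
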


We see that $\RR$ is a ``proxy'' for $\PP\land\QQ$, which may not be a
$\BB$-partition. In the next section, when applying the lemma to graphons, we
can choose $\RR$ to be a natural meet of the partitions $\PP$ and $\QQ$ (which
will not be $\PP\land\QQ$).

\begin{proof}
$0^\circ$\ Let us prove the finite case first, when $\BB=2^J$. We don't have to
worry about $\PP\land\QQ$ being a $\BB$-partition, so we can replace $\RR$ by
$\PP\land\QQ$. Let $K_n$ be the complete graph on $J$. We consider a partition
$\PP$ as a subgraph $G_\PP$ of $K_n$ that is the disjoint union of complete
graphs, corresponding to the partition classes. Then $G_{\PP\land\QQ}=G_\PP\cap
G_\QQ$ and $G_{\PP\lor\QQ}$ is the transitive closure of $G_\PP\cup G_\QQ$.
Furthermore, $n(1-\psi(\PP))$ is the rank of $G_\PP$ in the cycle matroid of
$K_n$. The inequality follows by the submodularity of the rank function of the
cycle matroid.

\medskip

$1^\circ$\ As a first step in the proof of the general case, we want to split
the classes of $\QQ_\fin$ into some subsets, and those classes in $\RR$
contained in a class in $\QQ_\fin$ to singletons, so that the inequality to be
proved remains equivalent.

Let $S$ be a finite-class representative for $\RR$ (which exists by Lemma
\ref{LEM:REPRESENTATIVE}). We split each $Y\in\QQ_\fin$ into $Y\cap S$ and
singleton sets, and let $\QQ'$ be the partition of $J$ obtained. Let $\RR'$ be
obtained by splitting every class of $\RR$ contained in $\cup\QQ_\fin$ to
singletons. It is easy to see that $\PP\lor\QQ'=\PP\lor\QQ$.

Let $\ub$ be a random point from $\pi$, and let $\vb$ be obtained by
re-randomizing $\ub$ along $\QQ$. If $\QQ_\ub$ is infinite, then no partition
class containing $\vb$ changes when replacing $\QQ$ by $\QQ'$. If $Y=\QQ_\ub$
is finite, then
\[
\frac1{|(\PP\lor\QQ')_\vb|} = \frac1{|(\PP\lor\QQ)_\vb|}, \et \frac1{|\RR'_\vb|} = 1.
\]
Using \eqref{EQ:FIN-P}, we have
\[
\E_\vb\Big(\frac1{|\QQ'_\vb|}\Big) = \frac{|Y\setminus S|+1}{|Y|}, \et \E_\vb\Big(\frac1{|\QQ_\vb|}\Big)= \frac{1}{|Y|}.
\]
Furthermore,
\[
\E_\vb\Big(\frac1{|\RR_\vb|}\Big) = \frac{\big|\RR|_Y\big|}{|Y|} = \frac{|Y\cap S|}{|Y|}.
\]
Combining these equations, we see that
\begin{align*}
\E_\vb\Big(&\frac1{|(\PP\lor\QQ')_\vb|}\Big) + \E_\vb\Big(\frac1{|\RR'_\vb|}\Big)
-\E_\vb\Big(\frac1{|\PP_\vb|}\Big) -\E_\vb\Big(\frac1{|\QQ'_\vb|}\Big)\\
&=\E_\vb\Big(\frac1{|(\PP\lor\QQ)_\vb|}\Big) + 1-\E_\vb\Big(\frac1{|\PP_\vb|}\Big) - \frac{|Y\setminus S|+1}{|Y|}\\
&=\E_\vb\Big(\frac1{|(\PP\lor\QQ)_\vb|}\Big) + \frac{|Y\cap S|}{|Y|}-\E_\vb\Big(\frac1{|\PP_\vb|}\Big) - \frac1{|Y|}\\
&= \E_\vb\Big(\frac1{|(\PP\lor\QQ)_\vb|}\Big) + \E_\vb\Big(\frac1{|\RR_\vb|}\Big)
-\E_\vb\Big(\frac1{|\PP_\vb|}\Big) -\E_\vb\Big(\frac1{|\QQ_\vb|}\Big).
\end{align*}
By Lemma \ref{LEM:RRAND-INT}, this implies that this equation holds when $\vb$
is replaced by $\ub$, and so
\begin{align*}
\psi(\RR') &+ \psi(\PP\lor\QQ')-\psi(\PP) - \psi(\QQ')
=\psi(\RR) + \psi(\PP\lor\QQ)-\psi(\PP) - \psi(\QQ),
\end{align*}
so considering the pair $(\PP,\QQ')$ instead of $(\PP,\QQ)$, we may assume that
every class of $\RR$ is a singleton.

\medskip

$2^\circ$\ Our next goal is to get rid of the non-singleton finite classes of
$\PP\lor\QQ$. Let $Z$ be the union of all finite classes of $\PP\lor\QQ$, and
let $\PP'$ and $\QQ'$ be obtained by splitting each class of $\PP$ and $\QQ$
contained in $Z$ to singletons. It is easy to see that $Z\in\BB$, and that both
$\PP'$ and $\QQ'$ are $\BB$-partitions. By Lemma \ref{LEM:REFINE-RESAMPLE},
both $\PP'$ and $\QQ'$ have the re-randomizing property.

We can write the inequality to be proved as
\begin{equation}\label{EQ:PUQU}
\intl_J f_{\PP,\QQ}(x)\,d\pi(x)\ge 0,
\end{equation}
where
\[
f(x)=f_{\PP,\QQ}(x)=\frac1{|\RR_x|}+\frac1{|(\PP\lor\QQ)_x|}-\frac1{|\PP_x|}-\frac1{|\QQ_x|}.
\]
Let $f'=f_{\PP',\QQ'}$. Then $f'(x)=f(x)$ if $x\in J\setminus Z$, and $f'(x)=0$
if $x\in Z$. By the finite case treated above, $\intl_Z f(x)\,d\pi(x)\ge 0$,
and hence
\[
\intl_J f(x)\,d\pi(x)\ge \intl_J f(x)\,d\pi(x).
\]
So we may replace $(\PP,\QQ)$ by $(\PP',\QQ')$; in other words, we may assume
that every class of $\PP\lor\QQ$ is either infinite or a singleton set.

\medskip

$3^\circ$\ Our next goal is to get rid of non-singleton finite classes
containing a singleton class from the other partition. Let $S$ and $T$ be the
unions of singleton classes of $\PP$ and $\QQ$, respectively. For $Y\in\QQ$,
define
\[
Y'=
  \begin{cases}
    Y\setminus S, & \text{if $Y$ is finite}, \\
    Y, & \text{otherwise}.
  \end{cases}
\]
If $|Y|>1$, then the class of $\PP\lor\QQ$ is infinite, so $Y'\not=\emptyset$.
Let the partition $\QQ'$ consist of the nonempty sets $Y'$ ($Y\in\QQ$), along
with those singleton subsets of $S$ that are contained in finite classes of
$\QQ$. We leave $\RR$ unchanged.

We argue similarly as in $1^\circ$. Let $\ub$ be a random point from $\pi$, and
let $\vb$ be obtained by re-randomizing $\ub$ along $\QQ$. Fix $\ub$, and let
$Y=\QQ_\ub$. Then If $Y$ is infinite, then no partition class containing $\vb$
changes when replacing $\QQ$ by $\QQ'$. If $Y$ is finite, then by $1^\circ$,
for any point $\vb\in Y$,
\[
\frac1{|\RR_\vb|} =1,\et \frac1{|\QQ_\vb|} =\frac1{|Y|},
\]
and by $2^\circ$,
\[
\E_\vb\Big(\frac1{|(\PP\lor\QQ)_\vb|}\Big) = 0.
\]
Furthermore, by \eqref{EQ:FIN-P},
\[
\E_\vb\Big(\frac1{|\QQ'_\vb|}\Big) = \frac{|Y\setminus Y'|+1}{|Y|},
\]
and
\[
\E_\vb\Big(\frac1{|(\PP\lor\QQ')_\vb|}\Big) = \frac{|Y\setminus Y'|}{|Y|}.
\]
Combining these equations,
\begin{align*}
\E_\vb\Big(&\frac1{|(\PP\lor\QQ')_\vb|}\Big) + \E_\vb\Big(\frac1{|\RR_\vb|}\Big)
-\E_\vb\Big(\frac1{|\PP_\vb|}\Big) -\E_\vb\Big(\frac1{|\QQ'_\vb|}\Big)\\
&=\frac{|Y\setminus Y'|}{|Y|} + 1-\E_\vb\Big(\frac1{|\PP_\vb|}\Big) - \frac{|Y\setminus Y'|+1}{|Y|}
=0+1- \E_\vb\Big(\frac1{|\PP_\vb|}\Big) - \frac1{|Y|}\\
&= \E_\vb\Big(\frac1{|(\PP\lor\QQ)_\vb|}\Big) + \E_\vb\Big(\frac1{|\RR_\vb|}\Big)
-\E_\vb\Big(\frac1{|\PP_\vb|}\Big) -\E_\vb\Big(\frac1{|\QQ_\vb|}\Big).
\end{align*}
As before, this shows that
\begin{align*}
\psi(\PP\land\QQ') &+ \psi(\PP\lor\QQ')-\psi(\PP) - \psi(\QQ')\\
&=\psi(\PP\land\QQ) + \psi(\PP\lor\QQ)-\psi(\PP) - \psi(\QQ).
\end{align*}
Carrying out the same process with $\PP$ and $\QQ$ interchanged, we may assume
that for every point, three possibilities remain: it is contained (i) in
singleton classes of both partitions $\PP$ and $\QQ$, or (ii) in a singleton
class and an infinite class, or (iii) in two non-singleton classes.

In these three cases we have, respectively,
\begin{alignat*}{2}
&\text{(i):}~&f_{\PP,\QQ}(x) &= 1+1-1-1=0,\\
&\text{(ii):}~&f_{\PP,\QQ}(x) &= 1+0-1-0=0,\\
&\text{(iii):}~&f_{\PP,\QQ}(x) &\ge 1+0-\frac12-\frac12=0.
\end{alignat*}
This proves the Lemma.
\end{proof}

\section{Cycle matroids of graphings}\label{SSEC:GRAPHING-MAT}

\subsection{Graphing basics}

A {\it graphing} is a quadruple $\Gb=(J,\BB,E,\lambda)$, where $(J,E)$ is a
graph on the standard Borel sigma-algebra $(J,\BB)$, $E$ is a symmetric Borel
subset of $\binom{J}{2}$, and $\lambda$ is a probability measure on $\BB$. Let
$\deg(u)=\deg_\Gb(u)$ denote the degree of $u\in J$ in the graph $\Gb$; for
$B\subseteq J$, let $\deg_B(u)$ denote the number of edges from the point $u\in
J$ to $B$; for $X\subseteq E$, let $\deg_X(u)$ denote the number of edges in
$X$ incident with $u$. For $x\in J$, we denote by $\Gb_x$ the connected
component of $\Gb$ containing $x$.

It is assumed that all degrees of the graph $(J,E)$ are finite and bounded by
an integer $D\ge 0$, and the ``measure-preservation'' equation
\begin{equation}\label{EQ:AB-BA}
\intl_A\deg_B(x)\,d\lambda(x)=\intl_B\deg_A(x)\,d\lambda(x)
\end{equation}
is satisfied for all $A,B\in\BB$. This condition is equivalent to saying that
for a random point $\xb$ from distribution $\lambda$, the (connected rooted)
graph $(G_\xb,\xb)$ is an involution-invariant random rooted graph as defined
by Benjamini and Schramm \cite{BSch}.

The quantity
\[
\overline{d} = \overline{d}_\Gb = \intl_J\deg(u)\,d\lambda(u)
\]
is the average degree of the graphing. The setfunction
\[
\eta(A\times B)=\frac1{\overline{d}} \intl_A\deg_B(x)\,d\lambda(x)
\]
extends to a probability measure on $\BB\times\BB$, which we call the {\it edge
measure} of $\Gb$. Clearly $\eta=\eta_\Gb$ is symmetric (invariant under
interchanging the coordinates) and supported on $E$. We can express $\eta(X)$
for every Borel set $X\subseteq E$ as
\begin{equation}\label{EQ:ETA-PI2}
\eta(X) = \frac1{\overline{d}} \intl_J \deg_X(u)\,d\lambda(u).
\end{equation}

In this paper, a {\it subgraphing} of a graphing $\Gb=(J,\BB,E,\lambda)$ is a
$4$-tuple $\Hb=(J,\BB,F,\lambda)$, where $F\subseteq E$ is a Borel set. It is
not entirely obvious that $\Hb$ satisfies the graphing axioms; see Lemma 18.19
in \cite{HomBook}. Equation \eqref{EQ:ETA-PI2} implies the following expression
for the edge measure of a subgraphing:
\begin{equation}\label{EQ:EDGE-SUB}
\eta_\Hb(X)=\frac{\overline{d}}{\overline{d}_\Hb}\eta(X)\qquad (X\in\BB\times\BB).
\end{equation}

\subsection{The rank function}

An obvious way to define a matroid rank function of a graphing would be to
consider finite subsets of edges, and consider the rank function $r(X)$ of the
cycle matroid of the graph they form. Every infinite set will have infinite
rank. We explore a more interesting possibility, generalizing the {\it
normalized rank} of the cycle matroid of a graph $G=(V,E)$ on $n$ nodes.

Let $c(G)$ denote the number of connected components of a graph $G$. Recalling
that $r(X)=n-c(V,X)$, we define
\[
\rho(X)=\frac{r(X)}{n}= 1-\frac{c(V,X)}{|V|}.
\]
This number is in $[0,1]$. By \eqref{EQ:FIN-P}, we can express it as
\begin{equation}\label{EQ:RHO-EXPECT}
\rho(X) = 1-\E_\ub\Big(\frac1{|X_\ub|}\Big),
\end{equation}
where $X_\ub$ is the set of nodes of the connected component of $(V,X)$
containing $\ub$.

Now this setfunction makes sense in the graphing case as well. Let
$\Gb=(J,\BB,E,\lambda)$ be a graphing. For every Borel set $X\subseteq E$, the
quadruple $\Gb^X=(J,\BB,X,\lambda)$ is a graphing. We denote by $\PP^X$ the
partition of $J$ into the connected components of $\Gb^X$. Then we can define
\begin{equation}\label{EQ:PSI-GRAPHING}
\rho(X)=\rho_\Gb(X)=1-\E_\ub\Big(\frac1{|V(\Gb^X_\ub|}\Big)=1-\psi(\PP^X),
\end{equation}
where $\ub$ is a random point from $\lambda$. We can see from this definition
right away a nice property of $\rho(X)$, namely that it is an intrinsic
parameter of the edge set $X$: if $\Hb$ is a subgraphing of $\Gb$, and $X$ is a
Borel subset of $E(\Hb)$, then $\rho_\Hb(X)=\rho_\Gb(X)$.

Our goal is to apply the results of Section \ref{SSEC:PART-SUBMOD} to the
partitions of $J$ into the connected components of sub-graphings, showing that
$1-\rho$ is supermodular, and so $\rho$ is submodular. For this, we need a
simple (but not quite trivial) lemma.

\begin{lemma}\label{LEM:RESAMPLE}
Let $\Gb=(J,\BB,E,\lambda)$ be a graphing, and let $\PP$ be the partition of
$J$ into the connected components of $\Gb$. Then $\PP$ is a $\BB$-partition
with the re-randomizing property.
\end{lemma}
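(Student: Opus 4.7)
My plan is to verify the two claims separately: (a) $\PP$ is a $\BB$-partition, and (b) re-randomizing along $\PP$ preserves $\lambda$.

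For (a), I would show that for every Borel $A\subseteq J$ the saturation equals $\PP(A)=\bigcup_{k\ge 0}N_k(A)$, where $N_k(A)$ is the set of points reachable from $A$ by a walk of length at most $k$. Setting $N_0(A)=A$ and $N_{k+1}(A)=N_k(A)\cup\{x\in J:~\deg_{N_k(A)}(x)\ge 1\}$, each $N_k(A)$ is Borel once we know that $x\mapsto\deg_B(x)$ is Borel for every $B\in\BB$. This is a standard fact about bounded-degree Borel graphs (one may decompose $E$ into finitely many Borel partial matchings, so $\deg_B$ is a finite sum of pullbacks of $\one_B$). Taking the union in $k$ gives $\PP(A)\in\BB$.

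For (b), let $J_0=\cup\PP_\fin\in\BB$ and let $n(x)=|\PP_x|$ on $J_0$; both are Borel by Lemma \ref{LEM:FIN-PART}. For a Borel test set $B\subseteq J$, the distribution of the re-randomized point $\vb$ assigns to $B$ the mass
\[
\lambda(B\setminus J_0)+\intl_{J_0}\frac{|\PP_x\cap B|}{n(x)}\,d\lambda(x),
\]
so $\vb\sim\lambda$ is equivalent to
\[
\intl_{J_0}\frac{|\PP_x\cap B|}{n(x)}\,d\lambda(x)=\lambda(B\cap J_0).
\]
I would derive this from the mass-transport identity
\[
\intl_J\sum_{y\in\PP_x}F(x,y)\,d\lambda(x)=\intl_J\sum_{y\in\PP_x}F(y,x)\,d\lambda(x)
\]
applied to $F(x,y)=\one(x\in J_0)\,\one(y\in B)/n(x)$. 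Since $\PP_x=\PP_y$ and $n(x)=n(y)$ whenever $y\in\PP_x$, the inner sum on the right collapses to $\one(x\in B\cap J_0)$, yielding exactly the desired equation.

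The main obstacle is establishing the mass-transport identity from the one-step axiom \eqref{EQ:AB-BA}. I would first extend \eqref{EQ:AB-BA} from Borel rectangles to all nonnegative Borel $g$ supported on $E$, yielding $\intl\sum_y g(x,y)\,d\lambda=\intl\sum_y g(y,x)\,d\lambda$ (routine passage through simple functions). Next I would decompose $F=\sum_{k\ge 0}F_k$ with $F_k$ supported on pairs at graph-distance exactly $k$, expressing $F_k$ via a sum over walks of length $k$ and iterating the one-step identity $k$ times; monotone convergence then assembles the full identity. Since $F$ is bounded by $\one_{J_0}/n(x)$ and $J_0$ consists of finite components, the sums are pointwise finite and $\intl\sum_y F(x,y)\,d\lambda\le\lambda(J_0)$, so no integrability issue arises. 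With the identity in place, the computation above completes the proof.
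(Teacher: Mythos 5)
Your proof takes essentially the same route as the paper's: part (a) iterates the Borelness of the one-step neighborhood operator (the paper cites Lemma 18.2 of \cite{HomBook} for the base step), and part (b) applies the Mass Transport Principle to the transport function $F(x,y)=\one(x\in\cup\PP_\fin)\one(y\in B)/|\PP_x|$, which up to notation is exactly the $f$ the paper uses. The only substantive difference is that you sketch a re-derivation of mass transport from the one-step identity \eqref{EQ:AB-BA}, whereas the paper invokes Proposition 18.49 of \cite{HomBook} directly.
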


\begin{proof}
The fact that $\PP$ is a $\BB$-partition, follows e.g.\ by Lemma 18.2 in
\cite{HomBook}, which asserts that if $A\in\BB$, then the neighbors of $A$ form
a Borel set. Repeating this argument, the set $A_k$ of nodes that are at a
distance at most $k$ form $A$ is a Borel set, and hence $\PP(A)=\cup_kA_k$ is
Borel.

To prove that $\PP$ has the re-randomizing property, let $\ub$ be a random
point from $\lambda$, let $\vb$ be obtained from $\ub$ by re-randomizing along
$\PP$, and let $\lambda'$ be the distribution of $\vb$. Let $B=\cup\PP_\fin$.
Since $\lambda'|_{J\setminus B}=\lambda|_{J\setminus B}$ by definition, it
suffices to show that $\lambda'(A)=\lambda(A)$ for all $A\in\BB$, $A\subseteq
B$. For $x,y\in J$, define
\[
f(x,y)=
  \begin{cases}
    1/|V(\Gb_x)|, & \text{if $x\in B$ and $y\in A\cap V(G_x)$,}\\
    0, & \text{otherwise}.
  \end{cases}
\]
Then
\[
\sum_y f(x,y) =
  \begin{cases}
    \displaystyle\frac{|V(\Gb_x)\cap A|}{|V(\Gb_x)|}, & \text{if $x\in B$}, \\
    0, & \text{otherwise},
  \end{cases}
\]
and
\[
\sum_x f(x,y) =
  \begin{cases}
    1, & \text{if $y\in A$}, \\
    0, & \text{otherwise}.
  \end{cases}
\]
By the Mass Transport Principle (in the form of \cite{HomBook}, Proposition
18.49),
\begin{align*}
\lambda(A) &= \intl_J \sum_x f(x,y)\,d\lambda(y) = \intl_J \sum_y f(x,y)\,d\lambda(x)\\
&=\intl_B \frac{|V(\Gb_x)\cap A|}{|V(\Gb_x)|}\,d\lambda(x) = \Pr(u\in B, u'\in A) = \lambda'(A).
\end{align*}
This proves the re-randomizing property of $\PP$.
\end{proof}

Another simple fact we need is the following.

\begin{lemma}\label{LEM:RHO-ETA}
The setfunction $\rho=\rho_\Gb$ defined by \eqref{EQ:PSI-GRAPHING} satisfies
\[
\frac{\overline{d}}{1+D} \eta(X)\le\rho(X)\le \overline{d}\eta(X).
\]
for all $X\in\BB\times\BB$.
\end{lemma}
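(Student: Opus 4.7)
The plan is to rewrite both sides as integrals over $J$ and reduce the two inequalities to pointwise bounds after averaging $\deg_X$ over the finite connected components of $\Gb^X$. Let $\PP=\PP^X$ be the partition of $J$ into the connected components of $\Gb^X$, and let $B=\cup\PP_\fin$ be the Borel set of points in finite components (Borel by Lemma~\ref{LEM:FIN-PART}). With the convention $1/\infty=0$, write $\phi(u)=1-1/|\PP_u|$, so that $\rho(X)=\int_J\phi\,d\lambda$ by \eqref{EQ:PSI-GRAPHING}, and let
\[
g(u)=\begin{cases} 2m(\PP_u)/|\PP_u|, & u\in B,\\ \deg_X(u), & u\notin B,\end{cases}
\]
where $m(C)$ denotes the number of edges of $\Gb^X$ inside the finite set $C$. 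Equivalently, $g$ is the conditional expectation of $\deg_X$ given $\PP$, which is Borel and bounded by $D$.

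By Lemma~\ref{LEM:RESAMPLE}, $\PP$ has the re-randomizing property; the bounded function $f=\deg_X-g$ vanishes off $B$, and the handshake identity $\sum_{v\in C}\deg_X(v)=2m(C)$ gives $\sum_{v\in C}f(v)=0$ on every finite component $C$. Hence Lemma~\ref{LEM:RRAND-INT} yields $\int_J g\,d\lambda=\int_J\deg_X\,d\lambda=\overline{d}\,\eta(X)$, and it suffices to verify the pointwise bound $g(u)/(1+D)\le\phi(u)\le g(u)$ for $\lambda$-almost every $u$. On $B^c$ the point $u$ lies in a non-singleton (indeed infinite) component, so $1\le\deg_X(u)\le D$ and $g(u)/(1+D)\le D/(1+D)\le 1=\phi(u)\le\deg_X(u)=g(u)$. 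On $B$, writing $k=|\PP_u|$ and $m=m(\PP_u)$, the bounds become $k-1\le 2m\le(1+D)(k-1)$, where the lower inequality is the connectedness of the component; for the upper inequality one splits into $k\le D+1$, where $2m\le k(k-1)\le(1+D)(k-1)$, and $k\ge D+1$, where $2m\le kD\le(1+D)(k-1)$.

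Integrating the pointwise inequality against $\lambda$ gives $\overline{d}\,\eta(X)/(1+D)\le\rho(X)\le\overline{d}\,\eta(X)$, as required. The one genuinely non-trivial step is the replacement of $\deg_X$ by its fibrewise average $g$, which relies on Lemma~\ref{LEM:RESAMPLE} supplying the re-randomizing property for $\PP^X$ together with Lemma~\ref{LEM:RRAND-INT}; the per-component inequality is a short case analysis, and the treatment of infinite components uses only that no point of $B^c$ is $X$-isolated.
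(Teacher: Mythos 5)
Your proof is correct, but it is considerably more elaborate than the argument the paper actually uses, and it deploys machinery the statement does not need. The paper establishes both inequalities by a \emph{direct pointwise} comparison with $\deg_X(u)$: since the component of $u$ in $\Gb^X$ contains $u$ and all of its $X$-neighbours, $|V(\Gb^X_u)|\ge 1+\deg_X(u)$, hence
\[
1-\frac1{|V(\Gb^X_u)|}\;\ge\;\frac{\deg_X(u)}{1+\deg_X(u)}\;\ge\;\frac{\deg_X(u)}{1+D},
\]
and for the upper bound $1-\frac1{|V(\Gb^X_u)|}\le\deg_X(u)$ one simply notes that both sides vanish when $\deg_X(u)=0$ and otherwise the left side is $<1\le\deg_X(u)$. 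Integrating against $\lambda$ and using \eqref{EQ:ETA-PI2} finishes. Your route instead replaces $\deg_X$ by its fibrewise average $g$ over finite components, invokes Lemma~\ref{LEM:RESAMPLE} and Lemma~\ref{LEM:RRAND-INT} to get $\int g\,d\lambda=\int\deg_X\,d\lambda$, and then compares $\phi=1-1/|\PP_u|$ with $g$ via a per-component edge-count inequality. This works, and the component-level inequality $k-1\le 2m\le (1+D)(k-1)$ is a clean observation, but it buys nothing here: you are importing the mass-transport apparatus (and the additional measurability discussion for $g$) to prove something that already holds point-by-point before any averaging. Your technique is really the one the paper saves for Lemma~\ref{LEM:FORESTS}, where the pointwise statement is genuinely false and only its fibrewise average holds; for Lemma~\ref{LEM:RHO-ETA} the simpler pointwise route is both shorter and logically lighter.
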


\begin{proof}
Clearly $|V(\Gb^X_u)|\ge 1+\deg_X(u)$, and hence
\[
1-\frac1{|V(\Gb^X_u)|} \ge 1-\frac1{1+\deg(u,X)} = \frac{\deg(u,X)}{1+\deg(u,X)}\ge
\frac{\deg(u,X)}{1+D}.
\]
On the other hand,
\[
1-\frac1{|V(\Gb^X_u)|}\le \deg(u,X),
\]
since both sides are zero if $\deg(u,X)=0$, and the left hand side is at most
$1$ and the right hand side is at least $1$ otherwise. Taking expectation, and
using \eqref{EQ:ETA-PI2}, we get the inequalities in the lemma.
\end{proof}

Let us state some nice properties of the setfunction $\rho$ (see \cite{Lov23a}
for more on these properties and their relevance). We say that a setfunction
$\fg:~\BB\to\R_+$ is {\it strongly bounded}, if there is a (finite) measure
$\alpha$ such that $\fg\le\alpha$. It is {\it continuous from above}, if for
every sequence $A_1\supseteq A_2\supseteq\ldots$ of sets $A_i\in\BB$ and
$A=\cap_i A_i$, we have $\fg(A_n)\to \fg(A)$ $(n\to\infty)$. It is {\it
absolutely continuous with respect to a measure $\pi$}, if for every $\eps>0$
there is a $\delta>0$ such that $\pi(S)<\delta$ implies that $|\fg(S)|<\eps$
for every $S\in\BB$.

\begin{theorem}\label{COR:PSI-SUPERMOD}
The setfunction $\rho$ defined on the Borel subsets of $E$ by
\eqref{EQ:PSI-GRAPHING} is increasing and submodular. In addition, it is
strongly bounded, continuous from above and absolutely continuous with respect
to a measure $\eta$.
\end{theorem}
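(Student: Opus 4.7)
The plan is to verify each of the four asserted properties separately, invoking one of the lemmas of the preceding sections in each case.

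\emph{Monotonicity} is almost immediate: if $X\subseteq Y$ are Borel subsets of $E$, then any path in $\Gb^X$ is also a path in $\Gb^Y$, so $\PP^X$ refines $\PP^Y$, and since $\psi$ is decreasing under coarsening, $\psi(\PP^X)\ge\psi(\PP^Y)$, i.e.\ $\rho(X)\le\rho(Y)$.

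For \emph{submodularity}, given Borel $X,Y\subseteq E$, I would set $\PP=\PP^X$, $\QQ=\PP^Y$ and $\RR=\PP^{X\cap Y}$. Each is a $\BB$-partition with the re-randomizing property by Lemma~\ref{LEM:RESAMPLE} applied to the subgraphings $\Gb^X$, $\Gb^Y$, $\Gb^{X\cap Y}$. Since any path in $X\cap Y$ is a path in both $X$ and $Y$, $\RR$ refines both $\PP$ and $\QQ$. A direct check gives $\PP\lor\QQ=\PP^{X\cup Y}$: two points lie in the same class of the join iff they are joined by an alternating chain of paths in $X$ and in $Y$, which is the same thing as a single path in $X\cup Y$. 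Lemma~\ref{LEM:PSI-SUPERMOD} then yields
\[
\psi(\PP^{X\cap Y})+\psi(\PP^{X\cup Y})\ge\psi(\PP^X)+\psi(\PP^Y),
\]
which, using $\rho(\cdot)=1-\psi(\PP^{(\cdot)})$, is exactly the submodular inequality~\eqref{EQ:SUBMOD}.

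The three measure-theoretic properties all follow from the sandwich bound $\rho(X)\le\overline{d}\,\eta(X)$ of Lemma~\ref{LEM:RHO-ETA}. \emph{Strong boundedness} is immediate, as $\overline{d}\,\eta$ is a finite measure on $\BB\times\BB$. \emph{Absolute continuity} with respect to $\eta$ follows by taking $\delta=\eps/\overline{d}$. For \emph{continuity from above}, submodularity together with $\rho(\emptyset)=0$ yields the subadditivity bound $\rho(A\cup B)\le\rho(A)+\rho(B)$; applied to a decreasing sequence $X_1\supseteq X_2\supseteq\cdots$ with intersection $X$, this gives
\[
0\le\rho(X_n)-\rho(X)\le\rho(X_n\setminus X)\le\overline{d}\,\eta(X_n\setminus X),
\]
and the right-hand side tends to $0$ because $\eta$ is a finite measure.

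The only step requiring more than routine bookkeeping is the identification $\PP^X\lor\PP^Y=\PP^{X\cup Y}$ together with the observation that $\PP^{X\cap Y}$ serves as the legitimate $\RR$-proxy for the possibly non-measurable meet $\PP^X\land\PP^Y$ that Lemma~\ref{LEM:PSI-SUPERMOD} requires. I do not expect further obstacles; in particular, continuity from above, which might appear delicate at the level of individual components (since $V(\Gb^{X_n}_u)$ can remain large through ever-lengthening paths), reduces to a trivial measure-theoretic estimate via the submodular sandwich.
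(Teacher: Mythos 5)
Your proposal is correct and follows essentially the same route as the paper. The heart of the argument in both is the identical choice $\PP=\PP^X$, $\QQ=\PP^Y$, $\RR=\PP^{X\cap Y}$ together with the observation that $\PP^X\lor\PP^Y=\PP^{X\cup Y}$ while $\RR$ refines both, so that Lemma~\ref{LEM:PSI-SUPERMOD} (via Lemma~\ref{LEM:RESAMPLE} for measurability and re-randomizability) delivers supermodularity of $\psi$. Your verification of the measure-theoretic properties via the sandwich bound of Lemma~\ref{LEM:RHO-ETA} — including the subadditivity argument for continuity from above — merely spells out what the paper dismisses with ``The smoothness properties are trivial by Lemma~\ref{LEM:RHO-ETA}.''
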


\begin{proof}
To prove submodularity, we use Lemma \ref{LEM:PSI-SUPERMOD}. Equivalently, we
want to prove that $\psi(\PP^X)$ is a supermodular setfunction of $X$:
\begin{equation}\label{EQ:GR-SUPER}
\psi(\PP^{X\cup Y}) + \psi(\PP^{X\cap Y}) \ge \psi(\PP^X) + \psi(\PP^Y).
\end{equation}
Here $\PP^{X\cup Y}$ etc.\ are $\BB$-partitions with the re-randomizing
property by Lemma \ref{LEM:RESAMPLE}. Furthermore, $\PP^{X\cup Y}=\PP^X\lor
\PP^Y$. In general, we will not have $\PP^{X\cap Y}=\PP^X\land \PP^Y$, but
$\RR=\PP^{X\cap Y}$ satisfies $\RR\le \PP^X$ and $\RR\le \PP^Y$, so Lemma
\ref{LEM:PSI-SUPERMOD} proves \eqref{EQ:GR-SUPER}.

The smoothness properties are trivial by Lemma \ref{LEM:RHO-ETA}.
\end{proof}

\subsection{Bases and minorizing measures}

Bases of the cycle matroid of a graph are spanning forests, so we expect that
bases of the matroid of a graphing (if meaningful at all) will be connected to
spanning trees. It is suggested in \cite{Lov23a}, that the ``bases'' for a
general increasing submodular setfunction $\fg$ with $\fg(\emptyset)=0$ should
be {\it minorizing charges}, i.e., nonnegative finitely additive measures
$\alpha$ on $\BB$ such that $\alpha\le\fg$.

It was proved (under somewhat different conditions) by Choquet \cite{Choq} and
\v{S}ipo\v{s} \cite{Sipos} (see also \cite{Denn}, Chapter 10), that for every
increasing submodular setfunction $\fg$ defined on Borel sets with
$\fg(\emptyset)=0$ and every family $\SS$ of Borel sets that is totally ordered
by inclusion, there is always a minorizing charge $\alpha$ with
$\alpha(J)=\fg(J)$. In our case, Theorem \ref{COR:PSI-SUPERMOD} implies that
every minorizing charge for $\rho$ is in fact countably additive. See
\cite{Lov23a} for other matroid-like properties of minorizing measures, like
Steinitz exchange, greedy algorithm, intersection, etc.

So we expect that spanning forests give rise to minorizing measures. In the
finite case, if $X\subseteq E$ is a spanning forest, then $\rho(Y)=|Y|/|V|$ for
every $Y\subseteq X$, and so $\rho$ is additive on $X$. This does not hold for
graphings in general (see Example \ref{EXA:TREE-NONADD}), but we can prove this
at least in a special case. We say that a graphing $\Gb=(J,\BB,E,\lambda)$ is
{\it hyperfinite}, if for every $\eps>0$ there is a Borel set $X\subseteq E$
with $\eta(X)<\eps$ such that all connected components of $\Gb\setminus X$ are
finite.

\begin{lemma}\label{LEM:FORESTS}
Let $\Gb=(J,\BB,E,\lambda)$ be a hyperfinite graphing such that every connected
component is a tree. Then for all Borel sets $U\subseteq E$, we have
$\rho(U)=(\overline{d}/2)\eta(U)$.
\end{lemma}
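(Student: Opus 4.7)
The plan is to establish the identity first in the special case where every component of $\Gb$ is finite, and then bootstrap to the general hyperfinite case by removing a small edge set and applying submodularity.

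For the finite-component case, the key tool is the re-randomization property from Lemma~\ref{LEM:RESAMPLE}: if every component is finite, then re-randomizing $\ub\sim\lambda$ produces a uniform random point $\vb$ of $V(\Gb_\ub)$ that is again $\lambda$-distributed. Applied to the integrand $1/|V(\Gb^U_\ub)|$, this reduces $1-\rho(U)$ to a componentwise average. The pointwise identity ``on a finite tree, the number of components of a subforest $U$ equals $|V|-|U|$'' (used via~\eqref{EQ:FIN-P}) then gives $\rho(U)=\E_\ub\bigl(|U\cap E(\Gb_\ub)|/|V(\Gb_\ub)|\bigr)$. The same re-randomization applied to $\deg_U$, combined with the handshake identity $\sum_{v\in V(\Gb_\ub)}\deg_U(v)=2|U\cap E(\Gb_\ub)|$, gives $\eta(U)=(2/\overline d)\,\E_\ub\bigl(|U\cap E(\Gb_\ub)|/|V(\Gb_\ub)|\bigr)$, and the two expressions match.

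For the general case, fix $\eps>0$ and use hyperfiniteness to pick a Borel $X\subseteq E$ with $\eta(X)<\eps$ such that every component of $\Gb\setminus X$ is finite. Since $\Gb\setminus X$ is still a forest with only finite components, the previous paragraph applies to this subgraphing; combining the intrinsic property of $\rho$ (noted after~\eqref{EQ:PSI-GRAPHING}) with~\eqref{EQ:EDGE-SUB} yields $\rho_\Gb(V)=(\overline d/2)\eta_\Gb(V)$ for every Borel $V\subseteq E\setminus X$. For an arbitrary Borel $U\subseteq E$, monotonicity gives the lower bound
\[
\rho(U)\ge\rho(U\setminus X)=\frac{\overline d}{2}\eta(U\setminus X)\ge\frac{\overline d}{2}\eta(U)-\frac{\overline d}{2}\eps,
\]
and submodularity of $\rho$ (Theorem~\ref{COR:PSI-SUPERMOD}) together with the crude bound $\rho\le\overline d\,\eta$ from Lemma~\ref{LEM:RHO-ETA} gives the matching upper bound
\[
\rho(U)\le\rho(U\setminus X)+\rho(U\cap X)\le\frac{\overline d}{2}\eta(U\setminus X)+\overline d\,\eta(U\cap X)\le\frac{\overline d}{2}\eta(U)+\frac{\overline d}{2}\eps,
\]
after using $\eta(U\setminus X)+\eta(U\cap X)=\eta(U)$ and $\eta(U\cap X)\le\eta(X)<\eps$. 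Letting $\eps\to 0$ finishes the proof.

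The main obstacle is the first step: using re-randomization to lift the pointwise tree identity to an integrated identity on the graphing. Everything that follows is a standard $\eps$-approximation built from tools already in hand.
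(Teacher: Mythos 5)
Your proposal is correct and follows essentially the same route as the paper's proof: first establish the identity when all components are finite by writing $1-\rho(U)$ and $\eta(U)$ as componentwise averages via re-randomization (the paper packages this as a single application of Lemma~\ref{LEM:RRAND-INT} to the function $f(x)=1-1/|U_x|-\tfrac12\deg_U(x)$, using the tree-rank and handshake identities componentwise), and then handle the general hyperfinite case by peeling off a small Borel set $X$, invoking the intrinsic property of $\rho$ together with~\eqref{EQ:EDGE-SUB}, and squeezing with monotonicity, subadditivity, and Lemma~\ref{LEM:RHO-ETA}. The only cosmetic difference is that you apply re-randomization to the two integrands separately and keep the constant $\overline d/2$ throughout the $\eps$-estimate, whereas the paper combines the two into one function and uses the cruder constants $D$ and $1$ in its bounds; these are immaterial.
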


In particular, $\rho$ is a measure. Applying this lemma with $U=E$, we obtain
that $\rho(E)=\overline{d}/2$, so the lemma could be phrased as
$\rho(U)=\rho(E)\eta(U)$.

\begin{proof}
First, assume that every connected component $Q$ of $\Gb$ is a finite tree.
Then $U\cap E(Q)$ is independent in the cycle matroid of $Q$, and hence
\[
\rho_Q(U\cap E(Q))= \frac{|U\cap E(Q)|}{|V(Q)|}.
\]
The quantities on both side can be expressed using a random point $\qb$ of
$V(Q)$.
\[
\rho_Q(U\cap E(Q)) = \E_\qb\Big(1-\frac1{|U_\qb|}\Big),
\]
and
\[
|U\cap E(Q)| =\frac12 \sum_{q\in V(Q)} \deg_U(q) = \frac{|V(Q)|}2 \E_\qb \deg_U(\qb).
\]
Thus
\[
\E_\qb\Big(1-\frac1{|U_\qb|} - \frac12 \deg_U(\qb)\Big) =0.
\]
Since this holds for every connected component $Q$ of $\Gb$, Lemma
\ref{LEM:RRAND-INT} implies that the same relation holds for a random point
$\xb$ from $\lambda$:
\[
\E_\xb\Big(1-\frac1{|U_\xb|} - \frac12 \deg_U(\xb)\Big) =0.
\]
But here
\[
\E_\xb\Big(1-\frac1{|U_\xb|}\Big) = \rho(U) \et \frac12\E_\xb \big(\deg_U(\xb)\big)
= \frac{\overline{d}}{2}\eta(U)
\]
This settles the case when all components of $\Gb$ are finite.

Now let $\Gb$ be any hyperfinite forest. Fix an $\eps>0$. By hyperfiniteness,
there is a Borel set $X\subseteq E$ such that $\eta(X)<\eps$ and every
connected component of $\Hb=\Gb\setminus X$ is finite. By Lemma
\ref{LEM:RHO-ETA} we have $\rho(U\cap X)\le D\eta(U\cap X) \le D\eps$, and so
\[
\rho(U\setminus X)\le \rho(U)\le \rho(U\setminus X) + \rho(U\cap X)\le \rho(U\setminus X) + D\eps.
\]
As noted before, we have $\rho(U\setminus X)=\rho_\Hb(U\setminus X)$. Hence by
the special case proved above and by \eqref{EQ:EDGE-SUB},
\[
\rho(U\setminus X)=\frac{\overline{d}_\Hb}{2}\eta_\Hb(U\setminus X) = \frac{\overline{d}}{2}\eta(U\setminus X),
\]
and so
\[
\rho(U) \le \rho(U\setminus X) + D\eps = \frac{\overline{d}}{2}\eta(U\setminus X) + D\eps \le \frac{\overline{d}}{2}\eta(U)+D\eps.
\]
while
\[
\rho(U) \ge \rho(U\setminus X) = \frac{\overline{d}}{2}\eta(U\setminus X) \ge
\frac{\overline{d}}{2}\eta(U)-\eps.
\]
Letting $\eps\to0$, the lemma follows.
\end{proof}

We can describe a class  of minorizing measures for $\rho$. In the finite case,
these are all the extremal minorizing measures. In the infinite case there must
be others, but constructing all of them will probably be very difficult.

\begin{corollary}\label{COR:FORESTS}
Let $\Gb$ be a graphing and $\Hb=(J,F)$, a hyperfinite spanning subforest of
$\Gb$. Then
\[
\alpha(X) = \frac{\overline{d}}{2}\eta(X\cap F)
\]
defines a measure on the Borel subsets of $E$ such that $\alpha\le\rho$ and
$\alpha(E)=\rho(E)$.
\end{corollary}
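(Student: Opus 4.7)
The plan is to derive the corollary directly from Lemma \ref{LEM:FORESTS} together with the monotonicity of $\rho$ and the intrinsicality $\rho_\Gb(U)=\rho_\Hb(U)$ for $U\subseteq F$. The three things to check are: (i) $\alpha$ is a measure, (ii) $\alpha\le\rho$, and (iii) $\alpha(E)=\rho(E)$. Item (i) is immediate, because $X\mapsto\eta(X\cap F)$ is the restriction of the countably additive measure $\eta$ to $F$, scaled by the constant $\overline{d}/2$.

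For (ii), I would fix a Borel set $X\subseteq E$ and argue
\[
\rho(X)\ge\rho(X\cap F)=\rho_\Hb(X\cap F)=\frac{\overline{d}_\Hb}{2}\,\eta_\Hb(X\cap F)=\frac{\overline{d}}{2}\,\eta(X\cap F)=\alpha(X).
\]
The first inequality uses monotonicity of $\rho$ (Theorem \ref{COR:PSI-SUPERMOD}); the first equality uses that $X\cap F\subseteq F=E(\Hb)$ so that $\rho_\Gb$ and $\rho_\Hb$ agree on this set (the intrinsicality remark in Section \ref{SSEC:GRAPHING-MAT}); the second equality is Lemma \ref{LEM:FORESTS} applied to the hyperfinite forest $\Hb$; and the last equality is \eqref{EQ:EDGE-SUB}, which gives $\overline{d}_\Hb\,\eta_\Hb(X\cap F)=\overline{d}\,\eta(X\cap F)$.

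For (iii), since $\Hb$ is a \emph{spanning} subforest, its connected components coincide with those of $\Gb$, so $\PP^F=\PP^E$ and therefore $\rho(E)=\rho(F)$. Then, applying the same chain as above with $X=F$,
\[
\rho(E)=\rho(F)=\rho_\Hb(F)=\frac{\overline{d}_\Hb}{2}\,\eta_\Hb(F)=\frac{\overline{d}_\Hb}{2}=\frac{\overline{d}}{2}\,\eta(F)=\alpha(E),
\]
where the penultimate equality uses \eqref{EQ:EDGE-SUB} with $X=F$, which yields $\overline{d}_\Hb=\overline{d}\,\eta(F)$.

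There is no serious obstacle; the corollary is a short bookkeeping exercise on top of Lemma \ref{LEM:FORESTS}. The one point that deserves care is the normalization between $\eta$ and $\eta_\Hb$, i.e.\ tracking the factor $\overline{d}/\overline{d}_\Hb$ from \eqref{EQ:EDGE-SUB}, since Lemma \ref{LEM:FORESTS} produces $(\overline{d}_\Hb/2)\eta_\Hb$ but the corollary is phrased in terms of $(\overline{d}/2)\eta$; once this is handled the identification is automatic, and the spanning hypothesis is used only at the single step $\rho(E)=\rho(F)$.
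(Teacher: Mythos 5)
Your proof is correct and follows essentially the same route as the paper's: both prove $\alpha(X)=\rho(X\cap F)$ via Lemma~\ref{LEM:FORESTS} and the normalization \eqref{EQ:EDGE-SUB}, then conclude $\alpha(X)\le\rho(X)$ by monotonicity and $\alpha(E)=\rho(F)=\rho(E)$ from the spanning hypothesis. You have simply spelled out the bookkeeping steps (the intrinsicality identity and the identity $\overline{d}_\Hb=\overline{d}\,\eta(F)$) that the paper leaves implicit.
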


\begin{proof}
It is clear that $\alpha$ defines a measure. Using \eqref{EQ:EDGE-SUB} and
Lemma \ref{LEM:FORESTS}, we get
\[
\alpha(X)=\frac{\overline{d}}{2}\eta(X\cap F)
= \frac{\overline{d}}{2}\, \frac{\overline{d}_\Hb}{\overline{d}}\, \eta_\Hb(X\cap F)
= \frac{\overline{d}_\Hb}{2} \frac2{\overline{d}_\Hb} \rho(X\cap F),
\]
and hence $\alpha(X)\le \rho(X)$, and $\alpha(E)=\alpha(F)=\rho(F)=\rho(E)$.
\end{proof}

\begin{example}\label{EXA:TREE-NONADD}
We cannot drop the hyperfiniteness assumption in Lemma \ref{LEM:FORESTS}.
Indeed, let $D=2r-1$ be an odd integer $(r\ge 3)$, and let $\Gb$ be a graphing
such that every connected component is a $D$-regular tree (such graphings
exist, for example the Bernoulli graphing of a $D$-regular rooted tree, see
e.g. \cite{HomBook}). By a theorem of Csóka, Lippner and Pilhurko \cite{CLP},
the edge set of $\Gb$ has a partition $E_0\cup E_1\cup\dots\cup E_{D+1}$ such
that $E_i\in\BB\times\BB$, $E_1,\dots,E_{D+1}$ are matchings, and $E_0$ covers
a set $S_0$ of measure $0$. Let $U=E_1\cup\dots\cup E_r$ and $W=E\setminus U$.
Then every $u\in J\setminus S_0$ misses at most one of $E_1,\dots,E_{D+1}$.
Hence the connected component of $(J,U)$ containing $u$ has at least $r$ nodes,
and the same holds for $(J,W)$. This implies that
\[
\rho(U)+\rho(W) \ge 2\big(1-\frac1r\big) = 2-\frac2r >1 = \rho(U\cup W).
\]
So $\rho$ is not even additive.
\end{example}

\subsection{Beginnings of a limit theory}

A next step should be to develop the limit theory for $\rho$; in other words,
to show that if a graph sequence $(G_n:~n=1,2,\dots)$ with uniformly bounded
degrees locally converges to a graphing $\Gb$, then the setfunctions
$\rho_{G_n}$ converge to the setfunction $\rho_\Gb$ in some sense. This is not
resolved in this paper; but we do prove that the total rank converges. Given a
graphing $\Gb=(J,\BB,E,\lambda)$, we define its {\it total rank} as
\begin{equation}\label{EQ:RK-DEF}
\rk(\Gb)=\rho_\Gb(E(\Gb)) = 1-\E\Big(\frac1{|V(\Gb_\ub)|}\Big),
\end{equation}
where $\ub$ is a random point from $\lambda$. The total rank of a finite graph
is defined analogously.

A graphing parameter $f$ is called {\it locally estimable}, if given an error
bound $\eps>0$ and degree bound $D\ge 0$, there are integers $k,r\ge 1$
depending only on $\eps$ and $D$, such that for any graphing
$\Gb=(J,\BB,E,\lambda)$ with maximum degree $D$, selecting $k$ independent
random nodes $x_1,\ldots,x_k$ from $\lambda$ and considering their
neighborhoods $B_r(x_1),\ldots, B_r(x_k)$, we can compute an estimate $R$ of
$f(G)$ such that
\begin{equation}\label{EQ:RANK-EST}
\Pr\big(|R - f(G)|\ge\eps\big) \le \eps.
\end{equation}
This definition applies, in particular, to estimating $f$ on finite graphs.

\begin{theorem}\label{THM:LOC-EST}
{\rm(a)} The graphing parameter $\rk$ is locally estimable. {\rm(b)} If
$G_1,G_2,\ldots$ is a sequence of finite graphs with all degrees bounded by
$D\ge 0$ locally converging to a graphing $\Gb$, then $\rk(G_n) \to \rk(\Gb)$
as ($n\to\infty$).
\end{theorem}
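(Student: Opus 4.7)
The plan is to produce a local estimator $\hat f_r(\ub)$ of $1/|V(\Gb_\ub)|$ whose pointwise bias is uniformly $O(1/r)$, and to use it for both parts. Define
\[
\hat f_r(\ub) = \begin{cases} 1/|V(\Gb_\ub)| & \text{if } \Gb_\ub\subseteq B_{r-1}(\ub),\\ 0 & \text{otherwise,}\end{cases}
\]
where $B_s(\ub)$ is the $s$-ball around $\ub$. The condition $\Gb_\ub\subseteq B_{r-1}(\ub)$ is equivalent to $B_r(\ub)=B_{r-1}(\ub)$, and in that case $|V(\Gb_\ub)|=|B_{r-1}(\ub)|$; hence $\hat f_r(\ub)$ is determined by the isomorphism type of the rooted $r$-ball $(B_r(\ub),\ub)$ and is a Borel function of $\ub$.

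The key step is the uniform pointwise estimate
\[
\bigl|\hat f_r(\ub)-1/|V(\Gb_\ub)|\bigr| \le 1/(r+1) \qquad (\ub\in J).
\]
If $\Gb_\ub\subseteq B_{r-1}(\ub)$, the two quantities agree. If $\Gb_\ub$ is infinite, both vanish. Otherwise $\Gb_\ub$ is finite but contains a vertex at distance at least $r$ from $\ub$, so a shortest path exhibits $r+1$ distinct vertices, giving $|V(\Gb_\ub)|\ge r+1$ and thus $|1/|V(\Gb_\ub)|-0|\le 1/(r+1)$. Integrating yields $|\E\hat f_r - (1-\rk(\Gb))|\le 1/(r+1)$, and the same estimate holds for any finite graph equipped with the uniform measure.

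For part (a), given $\eps>0$, choose $r$ with $1/(r+1)<\eps/2$ and return $R=1-\tfrac1k\sum_{i=1}^k \hat f_r(\ub_i)$, where $\ub_1,\dots,\ub_k$ are i.i.d.\ from $\lambda$. Each $\hat f_r(\ub_i)$ is a deterministic function of the rooted ball $B_r(\ub_i)$, so $R$ is computed from the admissible data. Since $\hat f_r\in[0,1]$, Hoeffding's inequality gives $\Pr\bigl(|\tfrac1k\sum_i\hat f_r(\ub_i)-\E\hat f_r|\ge\eps/2\bigr)\le 2e^{-k\eps^2/2}$, which is at most $\eps$ for $k$ a function of $\eps$ alone. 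Combined with the bias bound, this yields $\Pr(|R-\rk(\Gb)|\ge\eps)\le\eps$, as required.

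For part (b), fix $r$. Since $\hat f_r(\ub)$ depends only on the isomorphism type of $(B_r(\ub),\ub)$, local convergence $G_n\to\Gb$ implies $\E_{G_n}\hat f_r\to\E_\Gb\hat f_r$ as $n\to\infty$. Applying the bias bound to both $G_n$ and $\Gb$ then gives
\[
\limsup_{n\to\infty}\bigl|\rk(G_n)-\rk(\Gb)\bigr|\le 2/(r+1),
\]
and letting $r\to\infty$ completes the proof. The only non-routine ingredient is the uniform truncation estimate, which reduces to the observation that a breadth-first search that has not closed after $r$ steps has already exhibited at least $r+1$ distinct vertices; the rest is standard concentration and a three-$\eps$ argument.
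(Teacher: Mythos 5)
Your proof is correct and follows essentially the same strategy as the paper. The paper uses the estimator $1/|B_k(x)|$ directly (rather than your truncated $\hat f_r$, which returns $0$ when the ball is not closed), but the crux in both is identical: if the $r$-ball fails to exhaust the component, the component has at least $r$ (resp.\ $r+1$) vertices, so the bias in estimating $1/|V(\Gb_x)|$ from local data is uniformly $O(1/r)$, and then a standard concentration bound plus, for part (b), the fact that your estimator is a bounded function of the $r$-ball type finishes the argument.
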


\begin{proof}
Choose $k=\lceil 2/\eps\rceil$. Then for every $x\in J$ we have
$B_k(x)\subseteq V(x)$ and
\[
|B_k(x)|
  \begin{cases}
    = |V(x)|, & \text{if $B_k(x)=V(x)$}, \\
    \ge k, & \text{otherwise}.
  \end{cases}
\]
Hence
\[
\frac1{|V(x)|} \le \frac1{|B_k(x)|}\le \frac1{|V(x)|} + \frac1k.
\]
Taking expectation in $x$, we get
\[
\rk(\Gb) \ge 1-\E_x\Big(\frac1{|B_k(x)|}\Big) \ge \rk(\Gb) - \frac\eps2.
\]
Repeating this $N=O(\eps^{-2}\log\eps^{-1})$ times independently, the quantity
\[
R=1-\frac1N \sum_{i=1}^N\frac1{|B_k(x_i)|}
\]
will be closer to $\rk(\Gb)$ than $\eps$ with probability at least $1-\eps$.
The second assertion follows easily.
\end{proof}

\begin{remark}\label{REM:GRAPHING}
The total rank is even easier to define for the Benjamini--Schramm
representation of the weak limit \cite{BSch}, namely an involution-invariant
random rooted connected graph $(G,o)$. Then
\[
\overline{\rho}(\Gb)=1-\E\Big(\frac1{|V(G,o)|}\Big).
\]
However, the submodularity over subgraphs would be more difficult to formulate.
\end{remark}


\begin{thebibliography}{99}

\addcontentsline{toc}{section}{References}

\bibitem{Bach}
F.~Bach: Submodular functions: from discrete to continuous domains, {\it
Mathematical Programming} {\bf175} (2019), 419--459.

\bibitem{BSch}
I.~Benjamini and O.~Schramm: Recurrence of Distributional Limits of Finite
Planar Graphs, {\it Electronic J. Probab.} {\bf 6} (2001), paper no. 23, 1--13.

\bibitem{Rao}
K.P.S.~Bhaskara Rao, M.~Bhaskara Rao: {\it Theory of Charges, A Study of
Finitely Additive Measures}, Academic Press, London--New York--Paris--San
Diego--San Francisco (1982).

\bibitem{Bj87}
A.~Bj\"orner: Continuous partition lattice, {\it Proc. Natl. Acad. Sci.} {\bf
84} (1987) 6327--6329.

\bibitem{BjLo}
A.~Bj\"orner, L.~Lov\'asz: Pseudomodular lattices and continuous matroids, {\it
Acta Sci. Math.\ Szeged} {\bf 51} (1987), 295--308.

\bibitem{BDKPW}
H. Bruhn, R. Diestel, M. Kriesell, R. Pendavingh, P. Wollan, Axioms for
infinite matroids, Adv. Math. 239 (2013) 18--46.

\bibitem{Choq}
G.~Choquet: Theory of capacities, {\it Annales de l'Institut Fourier}, {\bf5}
(1954), 131--295.

\bibitem{CLP}
E.~Csóka, G.~Lippner and O.~Pikhurko: Kőnig's line coloring and Vizing's
theorems for graphings, {\it Forum of Math., Sigma} {\bf4} (2016), 40pp.

\bibitem{Denn}
D.~Denneberg: {\it Non-Additive Measure and Integral}, Kluwer Academic
Publisher, Dordrecht, 1994.

\bibitem{Hai}
M.~Haiman: On Realization of Björner's 'Continuous Partition Lattice' by
Measurable Partitions,  {\it Trans.\ Amer.\ Math\. Soc.} {\bf 343} (1994),
695--711.

\bibitem{Kech}
A.S.~Kechris: {\it Classical Descriptive Set Theory}, Springer Grad.\ Texts in
Math.\ {\bf 156} (1995).

\bibitem{KKLM}
F.~Kardo\v{s}, D.~Král', A.~Liebenau, L.~Machd: First order convergence of
matroids, {\it Europ.\ J.~Combinatorics} {\bf59} (2017), 150--168.

\bibitem{HomBook}
L.~Lov\'asz: {\it Large networks and graph limits}, Amer.\ Math.\ Soc.,
Providence, RI (2012).

\bibitem{Lov23a}
L.~Lovász: Submodular setfunctions, matroids and graph limits, preliminary
version on https://arxiv.org/abs/2302.0470

\bibitem{Mur1}
K.~Murota: {\it Discrete Convex Analysis}, Soc.\ Ind.\ Appl.\ Math.\ (1987).

\bibitem{Mur2}
K.~Murota: Discrete convex analysis, {\it Mathematical Programming} {\bf83}
(1998), 313--371.

\bibitem{JvN1}
J.~von Neumann: Continuous Geometries, {\it Proc. Nat. Acad. Sci. USA} 22
(1936), 92-100.

\bibitem{Oxley}
J.G.~Oxley: {\it Matroid Theory}, Oxford University Press, Oxford 1992

\bibitem{Schr}
A.~Schrijver: {\it Combinatorial Optimization---Polyhedra and Efficiency},
Springer-Verlag, Berlin, 2003.

\bibitem{Sipos}
Ján \v{S}ipo\v{s}: Nonlinear integrals, {\it Mathematica Slovaca} {\bf29}
(1979), 257--270.

\bibitem{Tol}
J.~Toland: {\it The Dual of $L_\infty(X,L,\lambda)$, Finitely Additive Measures
and Weak Convergence: A Primer}, SpringerBriefs in Mathematics,
Springer-Verlag, 2020.

\bibitem{Welsh}
D.J.A.~Welsh: {\it Matroid Theory}, L.M.S. Monographs {\bf 8}, Academic Press
(1976).

\bibitem{White}
N.~White, ed.: {\it Theory of Matroids}, Encyclopedia of Mathematics and its
Applications {\bf 26}, Cambridge University Press (1986).
\end{thebibliography}
\end{document}